\newtheorem{theorem}{Theorem}[section]
\newtheorem{lemma}[theorem]{Lemma}
\newtheorem{proposition}[theorem]{Proposition}
\newtheorem{corollary}[theorem]{Corollary}
\theoremstyle{definition}
\newtheorem{definition}[theorem]{Definition}
\theoremstyle{remark}
\newtheorem{remark}[theorem]{Remark}
\numberwithin{equation}{section}
\newcommand{\inv}{\ensuremath\mathrm{inv}}
\newcommand{\coinv}{\ensuremath\mathrm{coinv}}
\newcommand{\Des}{\ensuremath\mathrm{Des}}
\newcommand{\maj}{\ensuremath\mathrm{maj}}
\newcommand{\comaj}{\ensuremath\mathrm{comaj}}
\newcommand{\des}{\ensuremath\mathrm{des}}
\newcommand{\wt}{\ensuremath\mathrm{wt}}
\newcommand{\SYD}{\ensuremath\mathrm{SYD}}
\newcommand{\SSYT}{\ensuremath\mathrm{SSYT}}
\newcommand{\SYT}{\ensuremath\mathrm{SYT}}
\newcommand{\SSKD}{\ensuremath\mathrm{SSKD}}
\newcommand{\SKD}{\ensuremath\mathrm{SKD}}
\newcommand{\YKD}{\ensuremath\mathrm{YKD}}
\newcommand{\SSKT}{\ensuremath\mathrm{SSKT}}
\newcommand{\SKT}{\ensuremath\mathrm{SKT}}
\newcommand{\std}{\ensuremath\mathrm{std}}
\newcommand{\swap}{\ensuremath\mathfrak{s}}
\newcommand{\braid}{\ensuremath\mathfrak{b}}
\newcommand{\key}{\ensuremath\kappa}
\newcommand{\fund}{\ensuremath\mathfrak{F}}
\newcommand{\mac}{\ensuremath{E}}
\newcommand{\Mac}{\ensuremath{H}}
\newlength\cellsize \setlength\cellsize{12\unitlength}
\newcommand\cellify[1]{\def\thearg{#1}\def\nothing{}%
\ifx\thearg\nothing\vrule width0pt height\cellsize depth0pt%
  \else\hbox to 0pt{\usebox2\hss}\fi%
  \vbox to 12\unitlength{\vss\hbox to 12\unitlength{\hss$#1$\hss}\vss}}
\newcommand\tableau[1]{\vtop{\let\\=\cr
\setlength\baselineskip{-12000pt}
\setlength\lineskiplimit{12000pt}
\setlength\lineskip{0pt}
\halign{&\cellify{##}\cr#1\crcr}}}
\begin{document}


\title[Nonsymmetric Macdonald polynomials]{Nonsymmetric Macdonald polynomials and \\ a refinement of Kostka--Foulkes polynomials}  

\author[S. Assaf]{Sami Assaf}
\address{Department of Mathematics, University of Southern California, Los Angeles, CA 90089}
\email{shassaf@usc.edu}

\subjclass[2010]{Primary 33D52; Secondary 05E05}





\begin{abstract}
  We study the specialization of the type A nonsymmetric Macdonald polynomials at $t=0$ based on the combinatorial formula of Haglund, Haiman, and Loehr. We prove that this specialization expands nonnegatively into the fundamental slide polynomials, introduced by the author and Searles. Using this and weak dual equivalence, we prove combinatorially that this specialization is a positive graded sum of Demazure characters. We use stability results for fundamental slide polynomials to show that this specialization stabilizes and to show that the Demazure character coefficients give a refinement of the Kostka--Foulkes polynomials.
\end{abstract}

\maketitle

%
\section{Introduction}
%
\label{sec:introduction}

Macdonald's symmetric functions \cite{Mac88} are two parameter generalizations of classical symmetric functions \cite{Mac95} that simultaneously generalize the Hall--Littlewood symmetric functions and Jack symmetric functions. The transformed Macdonald symmetric functions in type A, commonly denoted by $\Mac_{\mu}(X;q,t)$, are known to have deep connections to representation theory of the symmetric group as shown by Garsia and Haiman \cite{GH96} and geometry of Hilbert schemes as shown by Haiman \cite{Hai01}. In 2004, Haglund \cite{Hag04} gave an elegant combinatorial formula for the monomial expansion of Macdonald symmetric functions, and Haglund, Haiman, and Loehr \cite{HHL05} proved and generalized this formula to include Macdonald integral forms $J_{\mu}(X;q,t)$, which are obtained from Macdonald's original orthogonal polynomials $P_{\mu}(X;q,t)$ by a scalar multiple.

The nonsymmetric Macdonald polynomials were introduced by Opdam \cite{Opd95} and Macdonald \cite{Mac96}, and generalized by Cherednik \cite{Che95}. Results in this nonsymmetric setting often extend to any root system giving hope that by passing through the nonsymmetric variations, one might be able to shed more light on the symmetric Macdonald polynomials in general types. Generalizing \cite{HHL05}, Haglund, Haiman and Loehr \cite{HHL08} gave a combinatorial formula for the nonsymmetric Macdonald polynomials in type A, commonly denoted by $\mac_a(X;q,t)$. They prove, combinatorially, that the nonsymmetric $\mac_a(X;q,t)$ stabilizes to the symmetric $P_{\mu}(X;q,t)$, emulating a similar result of Knop and Sahi \cite{KS97} for Jack polynomials, by relating the combinatorial models for both.

The connection between specializations of Macdonald polynomials and Demazure characters began with Sanderson \cite{San00} who used the theory of nonsymmetric Macdonald polynomials in type A to construct an affine Demazure module with graded character $P_{\mu}(X;q,0)$, similar to the construction of Garsia and Procesi \cite{GP92} for Hall-Littlewood symmetric functions $\Mac_{\mu}(X;0,t)$. Ion \cite{Ion03} generalized this result to nonsymmetric Macdonald polynomials in general type using the method of intertwiners in double affine Hecke algebras to realize $\mac_{a}(X;q,0)$ as an affine Demazure character. He also showed that $\mac_{a}(X;0,0)$ is a (finite) Demazure character.

To clarify connections with other specializations, the specialization $\mac_{a}(X;0,t)$ was studied by Ion \cite{Ion08} and also by Descouens and Lascoux \cite{DL05} who dubbed these nonsymmetric Hall--Littlewood polynomials since they stabilize precisely to the Hall--Littlewood polynomials $P_{\mu}(x;0,t)$. The specialization we consider in this paper, that of $\mac_{a}(X;q,0)$, stabilizes to $\omega P_{\mu}(x;0,t)$, where $\omega$ is the well-known involution on symmetric functions. That is, for $a$ weakly increasing, $\mac_{a}(X;0,1)$ is a homogeneous symmetric polynomial whereas $\mac_{a}(X;1,0)$ is an elementary symmetric polynomial.

We mention one further specialization, that of $\mac_{a}(X;\infty,\infty)$. Ion \cite{Ion08}, again using Hecke algebras, showed that this is precisely a Demazure atom, and this was also proved combinatorially by Haglund, Haiman, and Loehr \cite{HHL08} in type A. This specialization was studied further by Mason \cite{Mas09} who developed a combinatorial theory parallel to that for Schur functions. 

In this paper, we consider the specialization $\mac_a(X;q,0)$ and relate this to the (finite) type A Demazure characters $\mac_a(X;0,0)$. This result suggest that an analogous statement might hold in other types as well. For our purposes, since Demazure characters stabilize to Schur functions, this paper provides a bridge between the combinatorics of the symmetric and nonsymmetric settings by drawing direct parallels between expansions and specializations on both sides.

To begin, we show that the expansion of $\mac_a(X;q,0)$ into fundamental slide polynomials $\fund_b$, introduced by Assaf and Searles \cite{AS17} to study Schubert polynomials, is a polynomial in $q$ with nonnegative integer coefficients. This parallels the expansion of $\Mac_{\mu}(X;q,t)$ into fundamental quasisymmetric functions $F_{\beta}$, introduced by Gessel \cite{Ges84}. Assaf and Searles showed that the fundamental slide polynomials stabilize to fundamental quasisymmetric functions, so together this recovers the stability results of \cite{HHL08,KS97}. 

Utilizing the theory of weak dual equivalence \cite{Ass-1}, we group together terms in the fundamental slide expansion of $\mac_a(X;q,0)$ to prove, combinatorially, that the coefficients of $\mac_a(X;q,0)$ when expanded into Demazure characters are polynomials in $q$ with nonnegative integer coefficients. This parallels the use of dual equivalence \cite{Ass15} which collects terms in the fundamental quasisymmetric expansion of $\Mac_{\mu}(X;q,t)$ into classes that are conjecturally Schur positive \cite{Ass15}, and were proved to be Schur positive for the case of $\Mac_{\mu}(X;0,t)$ by Roberts \cite{Rob-un}. Moreover, we prove that the involutions that group terms commute with the combinatorial bijection that proves stability.

Finally, we interpret our results to give a refinement of the \emph{Kostka--Foulkes polynomials} $K_{\lambda,\mu}(t)$ that give the change of basis coefficients from $\Mac_{\mu}(X;0,t)$ to Schur functions in terms of the \emph{nonsymmetric Kostka--Foulkes polynomials} $K_{a,b}(q)$ that similarly give the change of basis coefficients from $\mac_{b}(X;q,0)$ to Demazure characters.

Our proofs are purely combinatorial, and the paper is largely self-contained. The main hurdle in pushing this work further is the lack of known (or even conjectured) positivity for $\mac_a(X;q,t)$. Indeed, the Schur positivity for $\Mac_{\mu}(X;q,t)$, conjectured by Macdonald \cite{Mac88} and proved by Haiman \cite{Hai01}, translates to Macdonald's original $P_{\mu}(X;q,t)$ and the integral form $J_{\mu}(X;q,t)$ via \emph{plethystic substitution}. There is no known analog of plethysm for the full polynomial ring.

\begin{center}
{\sc Acknowledgments}
\end{center}

The author thanks Per Alexandersson, who together with Sawhney is studying the same specialization \cite{AS-un}, for sharing data suggesting that the specialization of nonsymmetric Macdonald polynomials considered in this paper appear to expand nonnegatively into key polynomials. The author is also grateful to Jim Haglund and Bogdan Ion for detailed conversations about nonsymmetric Macdonald polynomials and their specializations.

%
\section{Demazure characters}
%
\label{sec:key}

Throughout we let $X$ denote the finite variable set $x_1, x_2, \ldots, x_n$. Polynomials in $n$ variables are naturally index by weak compositions of length at most $n$. The \emph{key diagram} of a weak composition $a$ is the collection of cells in the $\mathbb{N}\times\mathbb{N}$ lattice with $a_i$ cells left-justified in row $i$. Key diagrams play the analogous role for polynomials that Young diagrams play for symmetric functions. 

Standard key tableaux were introduced by Assaf \cite{Ass-1} to develop a theory of type A Demazure characters, which we call \emph{key polynomials}, parallel to that of Schur functions. 

\begin{definition}
  A \emph{key tableau} is a filling of a key diagram with positive integers such that columns have distinct entries, rows weakly decrease, and, if some entry $i$ is above and in the same column as an entry $k$ with $i<k$, then there is an entry immediately right of $k$, say $j$, and $i<j$.
  \label{def:key-tableau}
\end{definition}

A \emph{standard key tableau} is a bijective filling of a key diagram. This coincides precisely with the definition in \cite{Ass-1} since standard key tableaux necessarily have strictly decreasing rows. For examples, see Figure~\ref{fig:SKT}. Denote the set of standard key tableaux of shape $a$ by $\SKT(a)$. 
  
\begin{figure}[ht]
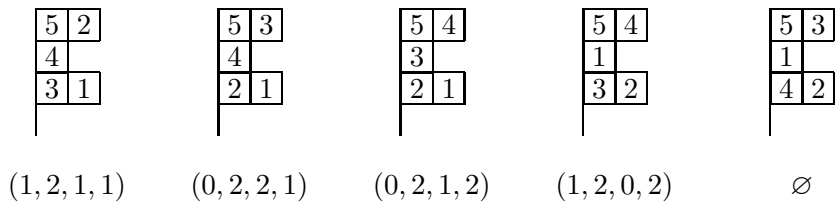

  \begin{displaymath}
    \begin{array}{c@{\hskip 2\cellsize}c@{\hskip 2\cellsize}c@{\hskip 2\cellsize}c@{\hskip 3\cellsize}c}
      \vline\tableau{5 & 2 \\ 4 \\ 3 & 1 \\ & } & 
      \vline\tableau{5 & 3 \\ 4 \\ 2 & 1 \\ & } & 
      \vline\tableau{5 & 4 \\ 3 \\ 2 & 1 \\ & } & 
      \vline\tableau{5 & 4 \\ 1 \\ 3 & 2 \\ & } & 
      \vline\tableau{5 & 3 \\ 1 \\ 4 & 2 \\ & } \\ \\
      (1,2,1,1) & (0,2,2,1) & (0,2,1,2) & (1,2,0,2) & \varnothing
    \end{array}
  \end{displaymath}
  \caption{\label{fig:SKT}Standard key tableaux of shape $(0,2,1,2)$ and their weak descent compositions.}
\end{figure}

The \emph{key polynomials} first arose as Demazure characters for type A \cite{Dem74} and were later studied combinatorially by Lascoux and Sch{\"u}tzenberger \cite{LS90}. Reiner and Shimozono \cite{RS95} gave a thorough treatment of key polynomials, including four equivalent definitions. We choose to begin with a different definition, due to Assaf \cite{Ass-1}, but we postpone this to introduce another basis for polynomials that will facilitate our main results.

Assaf and Searles \cite{AS17} introduced the fundamental slide basis for polynomials that parallels Gessel's fundamental basis for quasisymmetric functions \cite{Ges84}. Given two weak compositions $a$ and $b$ of length $n$, write $b \geq a$ if $b_1 + \cdots + b_k \geq a_1 + \cdots + a_k$ for all $k=1,\ldots,n$. Given compositions $\alpha,\beta$, write \emph{$\beta$ refines $\alpha$} if there exist indices $i_1<\ldots<i_k$ such that $\beta_1 + \cdots + \beta_{i_j} = \alpha_1 + \cdots + \alpha_j$. For example, $(1,2,2)$ refines $(3,2)$ but does not refine $(2,3)$.

\begin{definition}[\cite{AS17}]
  The \emph{fundamental slide polynomial} $\fund_{a}(X)$ is given by
  \begin{equation}
    \fund_{a}(X) = \sum_{\substack{b \geq a \\ \mathrm{flat}(b) \ \mathrm{refines} \ \mathrm{flat}(a)}} X^b,
    \label{e:slide}
  \end{equation}
  where $\mathrm{flat}(a)$ is the composition obtained by removing zero parts from $a$.
\end{definition}

For example, we compute
\[ \fund_{(0,2,1,2)}(x_1,x_2,x_3,x_4) = x_2^2 x_3 x_4^2 + x_1 x_2 x_3 x_4^2 + x_1^2 x_3 x_4^2 + x_1^2 x_2 x_4^2 + x_1^2 x_2 x_3 x_4 + x_1^2 x_2 x_3^2 .\]

We use the fundamental slide polynomials to define key polynomials as the generating polynomials for standard key tableaux. To do so, we assign to each standard key tableau (more generally, to each standard filling of a key diagram), a weak composition.

\begin{definition}
  Given a standard filling $T$ of a key diagram, the \emph{weak descent composition of $T$}, denoted by $\des(T)$, is constructed as follows. Partition the decreasing permutation $n \cdots 2 1$ into blocks, say $\tau^{(k)} | \cdots | \tau^{(1)}$, broken between $i+1$ and $i$ precisely when $i+1$ lies weakly right of $i$ in $T$. Set $t_k$ to be the row of $\tau^{(k)}_1$ if it lies in the first column and otherwise $n$. For $i<k$, set $t_i = \min(\mathrm{row}(\tau^{(i)}_1),t_{i+1}-1)$ if $\tau^{(i)}_1$ lies in the first column and otherwise $t_{i+1}-1$. Set $\des(T)_{t_i} = |\tau^{(i)}|$ and all other parts are zero if all $t_i>0$ and $\des(T) = \varnothing$ otherwise. 
  \label{def:weak-des}
\end{definition}

\begin{remark}
  In \cite{Ass-1}, the weak descent composition of $T$ is computed without the first column caveat. However, when $T$ is a standard key tableau, rows weakly decrease, so any entry not in the first column that occurs as $\tau^{(i)}_1$ will necessarily result in $t_i = t_{i+1}-1$ since the entry to its left must lie in some previous block. Thus the definitions agree when $T$ is a standard key tableau.
\end{remark}

For example, the weak descent compositions for $\SKT(0,2,1,2)$ are given in Figure~\ref{fig:SKT}.


If $\des(T) = \varnothing$, then we say that $T$ is \emph{virtual}. Extend previous notation for fundamental slide polynomials to avoid discounting virtual objects by setting 
\begin{equation}
  \fund_{\varnothing} = 0.
\end{equation}

We have the following expansion for key polynomials in terms of fundamental slide polynomials that we may take as our definition.

\begin{proposition}[\cite{Ass-1}]
  The key polynomial $\key_a(X)$ is given by
  \begin{equation}
    \key_{a}(X) = \sum_{T \in \SKT(a)} \fund_{\des(T)}(X) .
  \end{equation}
  \label{prop:key}
\end{proposition}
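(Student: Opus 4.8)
The plan is to recover this expansion from the definition of $\key_a$ given in \cite{Ass-1}, namely the generating function $\key_a(X) = \sum_U x^{\wt(U)}$ summed over all key tableaux $U$ of shape $a$ with entries in $\{1,\dots,n\}$ in the sense of Definition~\ref{def:key-tableau}, where $\wt(U)_i$ records the number of entries of $U$ equal to $i$; the proof proceeds by standardization, mirroring the passage from $\SSYT$ to $\SYT$ in the Schur case. First I would define a standardization map $\std$ sending such a $U$ to the bijective filling obtained by relabeling its entries by $1, 2, \dots, |a|$, breaking ties among equal entries in the unique order for which the row- and column-conditions of Definition~\ref{def:key-tableau} upgrade to strict. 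I would then verify that $\std(U) \in \SKT(a)$ --- that is, check that the three conditions of Definition~\ref{def:key-tableau} are preserved under relabeling --- and, conversely, that every $T \in \SKT(a)$ is $\std(U)$ for at least one $U$.

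The heart of the argument is the fiberwise identity
\begin{equation}
  \sum_{U \,:\, \std(U) = T} x^{\wt(U)} = \fund_{\des(T)}(X)
\end{equation}
for each fixed $T \in \SKT(a)$. A tableau $U$ with $\std(U) = T$ is determined by a weakly increasing assignment of variable indices to the standard values $1, \dots, |a|$ that is constant on prescribed runs; the key observation is that consecutive standard values $i$ and $i+1$ may receive the same variable index exactly when $i+1$ lies weakly right of $i$ in $T$, since otherwise the weakly decreasing rows together with column-distinctness force a strict increase. This is precisely the rule cutting the decreasing word $n\cdots 2 1$ into the blocks $\tau^{(k)}\mid\cdots\mid\tau^{(1)}$ of Definition~\ref{def:weak-des}. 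Determining the admissible variable indices for the first entry of each block --- which is bounded by its row when that entry lies in the first column and is otherwise constrained only relative to the next block, because the entry immediately to its left already occupies an earlier block --- reproduces exactly the integers $t_k, t_{k-1}, \dots, t_1$, and hence shows that the exponent vectors $b = \wt(U)$ occurring in the fiber are exactly the weak compositions with $b \geq \des(T)$ and $\mathrm{flat}(b)$ refining $\mathrm{flat}(\des(T))$. When some $t_i \le 0$ the fiber is empty, consistent with the convention $\fund_{\varnothing} = 0$ and the definition of a virtual tableau. Summing this identity over all $T \in \SKT(a)$ and regrouping the defining sum for $\key_a$ by the value of $\std$ gives the proposition.

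I expect the main obstacle to be the middle step: pinning down, for a given $T$, the exact set of variable indices available to the leading entry of each block so that the fiber matches $\fund_{\des(T)}$ on the nose, incorporating simultaneously the dominance condition $b \geq \des(T)$ and the $\mathrm{flat}$-refinement condition, and in particular confirming that the first-column caveat in Definition~\ref{def:weak-des} is exactly what makes this bookkeeping correct. Checking that $\std$ intertwines the two notions of key tableau is routine but must be handled with some care, since, in contrast with the Young tableau setting, rows of a general key tableau only weakly decrease and the third condition of Definition~\ref{def:key-tableau} must be re-verified after relabeling.
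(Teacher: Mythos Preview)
Your approach is correct in substance and is essentially the same standardization argument that appears in the paper---but note that the paper does not actually \emph{prove} Proposition~\ref{prop:key}. It is imported from \cite{Ass-1} and explicitly taken as the \emph{definition} of $\key_a$ here (``\ldots that we may take as our definition''). The paper then proves the monomial expansion over semi-standard key tableaux (Proposition~\ref{prop:key-std}) by exactly the standardization/fiber argument you outline: define $\std$ on $\SSKT(a)$ by relabeling equal entries right to left, check $\std(T)\in\SKT(a)$, and show that for each $S\in\SKT(a)$ the fiber $\std^{-1}(S)$ contributes precisely $\fund_{\des(S)}(X)$. So your proposal and the paper's argument are the same bijection, just traversed in opposite directions depending on which description of $\key_a$ is taken as primitive.

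One small point of care: you describe the starting sum as over ``all key tableaux $U$ of shape $a$ with entries in $\{1,\dots,n\}$,'' but the relevant objects in the paper are the \emph{semi-standard} key tableaux, which additionally require that no entry exceed its row index. You implicitly invoke this when you say the leading entry of a block ``is bounded by its row when that entry lies in the first column''; without the semi-standard condition that bound has no source. Make this explicit at the outset and your fiber computation goes through exactly as in the paper's proof of Proposition~\ref{prop:key-std}.
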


For example, from Figure~\ref{fig:SKT} we compute
\[ \key_{(0,2,1,2)} =  \fund_{(1,2,1,1)} + \fund_{(0,2,2,1)} + \fund_{(0,2,1,2)} + \fund_{(1,2,0,2)}. \]

A \emph{semi-standard key tableau} is a key tableau in which no entry exceeds its row index. Denote the set of semi-standard key tableaux of shape $a$ by $\SSKT(a)$. For example, see Figure~\ref{fig:SSKT}.

\begin{figure}[ht]
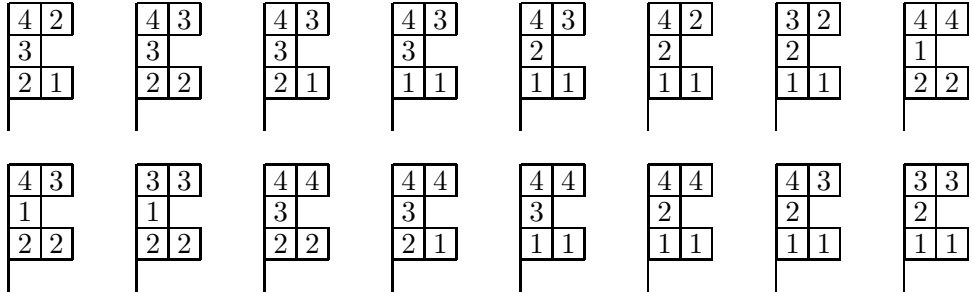

  \begin{displaymath}
    \begin{array}{c@{\hskip 2\cellsize}c@{\hskip 2\cellsize}c@{\hskip 2\cellsize}c@{\hskip 2\cellsize}c@{\hskip 2\cellsize}c@{\hskip 2\cellsize}c@{\hskip 2\cellsize}c}
      \vline\tableau{4 & 2 \\ 3 \\ 2 & 1 \\ & } & 
      \vline\tableau{4 & 3 \\ 3 \\ 2 & 2 \\ & } & 
      \vline\tableau{4 & 3 \\ 3 \\ 2 & 1 \\ & } & 
      \vline\tableau{4 & 3 \\ 3 \\ 1 & 1 \\ & } & 
      \vline\tableau{4 & 3 \\ 2 \\ 1 & 1 \\ & } & 
      \vline\tableau{4 & 2 \\ 2 \\ 1 & 1 \\ & } & 
      \vline\tableau{3 & 2 \\ 2 \\ 1 & 1 \\ & } & 
      \vline\tableau{4 & 4 \\ 1 \\ 2 & 2 \\ & } \\ \\
      \vline\tableau{4 & 3 \\ 1 \\ 2 & 2 \\ & } & 
      \vline\tableau{3 & 3 \\ 1 \\ 2 & 2 \\ & } & 
      \vline\tableau{4 & 4 \\ 3 \\ 2 & 2 \\ & } & 
      \vline\tableau{4 & 4 \\ 3 \\ 2 & 1 \\ & } & 
      \vline\tableau{4 & 4 \\ 3 \\ 1 & 1 \\ & } & 
      \vline\tableau{4 & 4 \\ 2 \\ 1 & 1 \\ & } & 
      \vline\tableau{4 & 3 \\ 2 \\ 1 & 1 \\ & } & 
      \vline\tableau{3 & 3 \\ 2 \\ 1 & 1 \\ & } 
    \end{array}
  \end{displaymath}
  \caption{\label{fig:SSKT}The semi-standard key tableaux of shape $(0,2,1,2)$.}
\end{figure}

\begin{proposition}
  The \emph{key polynomial} $\key_a(X)$ is given by
  \begin{equation}
    \key_{a}(X) = \sum_{T \in \SSKT(a)} X^{\wt(T)} ,
    \label{e:key}
  \end{equation}
  where $\wt(T)$ is the weak composition whose $i$th part is the number of entries equal to $i$.
  \label{prop:key-std}
\end{proposition}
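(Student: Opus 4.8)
The plan is to prove the identity by a weight‑preserving \emph{standardization} bijection together with the fundamental slide expansion of Proposition~\ref{prop:key}, in exact parallel with the classical equivalence of the two descriptions of a Schur polynomial (as a generating function over semistandard Young tableaux, and as a nonnegative sum of fundamental quasisymmetric functions indexed by standard Young tableaux). I would define a map $\std\colon\SSKT(a)\to\SKT(a)$ by relabelling the $n=|a|$ entries of $S$ bijectively with $1,\dots,n$ so as to preserve the weak order of the values, breaking ties within a value class — whose cells lie in distinct columns — by assigning the smaller labels to the cells in the larger columns. One checks directly that $\std(S)$ has strictly decreasing rows and distinct columns, and that the key condition of Definition~\ref{def:key-tableau} is preserved: that condition applied to $S$ produces a \emph{strict} inequality $i<j$, which is exactly what is needed to deduce the corresponding inequality of relabelled entries, so $\std(S)\in\SKT(a)$. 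The inverse ``unstandardization'' takes a pair $(T,b)$ and fills the cell of $T$ labelled $j$ with $v_j$, where $v_1\le\cdots\le v_n$ is the weakly increasing word in which $m$ appears $b_m$ times.

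The heart of the matter is the fiberwise identity, for each fixed $T\in\SKT(a)$,
\[
  \sum_{\substack{S\in\SSKT(a)\\ \std(S)=T}} X^{\wt(S)} \;=\; \fund_{\des(T)}(X),
\]
where both sides are $0$ when $T$ is virtual. An element of $\std^{-1}(T)$ is the same data as a weakly increasing word $v_1\le\cdots\le v_n$ (with $v_j$ the entry in the cell of $T$ labelled $j$) contributing the monomial $x_{v_1}\cdots x_{v_n}$, so it suffices to decode the two conditions defining the monomial support of $\fund_{\des(T)}$ in terms of $v$. Unwinding the block decomposition $\tau^{(k)}\mid\cdots\mid\tau^{(1)}$ of $n\cdots 21$ gives $\mathrm{flat}(\des(T))=(|\tau^{(1)}|,\dots,|\tau^{(k)}|)$, whose partial sums are exactly the labels $l$ at which $l{+}1$ lies weakly right of $l$ in $T$; hence ``$\mathrm{flat}(b)$ refines $\mathrm{flat}(\des(T))$'' is equivalent to ``$v_l<v_{l+1}$ whenever $l{+}1$ lies weakly right of $l$''. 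Using $\des(T)_{t_i}=|\tau^{(i)}|$ together with $t_1<\cdots<t_k$, a partial‑sum computation shows ``$b\ge\des(T)$'' is equivalent to ``$v_{\tau^{(i)}_1}\le t_i$ for all $i$''. It then remains to check that, for weakly increasing $v$, these two conditions together are equivalent to the filling $j\mapsto v_j$ being a semistandard key tableau: the row, column, and key conditions are forced by the strict‑jump condition and the structure of $T$, while the bound ``entry $\le$ row index'' is where the recursion $t_i=\min(\mathrm{row}(\tau^{(i)}_1),t_{i+1}-1)$ and the first‑column caveat of Definition~\ref{def:weak-des} enter, through a downward induction on $i$ combining $v_{\tau^{(i)}_1}\le\mathrm{row}(\tau^{(i)}_1)$ with the forced jump $v_{\tau^{(i)}_1}<v_{\tau^{(i)}_1+1}\le v_{\tau^{(i+1)}_1}$ between consecutive blocks.

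Granting the fiberwise identity, summing over $T\in\SKT(a)$ and invoking Proposition~\ref{prop:key} yields
\[
  \sum_{S\in\SSKT(a)} X^{\wt(S)} \;=\; \sum_{T\in\SKT(a)} \fund_{\des(T)}(X) \;=\; \key_a(X),
\]
the contribution of virtual $T$ being $0$ on both sides. The step I expect to be the genuine obstacle is the one just described: matching the \emph{global} partial‑sum inequalities $b\ge\des(T)$ to the \emph{local}, cell‑by‑cell constraint that no entry exceed its row index. One direction amounts to checking that the recursively defined $t_i$ (with the first‑column caveat) computes precisely the sharpest bound forced on the top entry of block $i$; the converse needs a structural fact about key tableaux guaranteeing that this single bound then propagates to the whole block — whose remaining cells may sit in strictly smaller rows — and that the reconstructed filling really does satisfy the key condition of Definition~\ref{def:key-tableau}, not merely the row and column conditions. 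The rest is routine bookkeeping patterned on the classical standardization argument for Young tableaux.
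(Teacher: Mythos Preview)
Your proposal is correct and follows essentially the same approach as the paper: define a standardization map $\std:\SSKT(a)\to\SKT(a)$ (the paper phrases the tie-breaking as ``relabel cells labeled $k$ from right to left,'' which is your ``smaller labels to larger columns''), establish the fiberwise identity $\sum_{S\in\std^{-1}(T)}X^{\wt(S)}=\fund_{\des(T)}(X)$, and sum over $T$ using Proposition~\ref{prop:key}. If anything, you are more explicit than the paper about the point you flag as the obstacle---matching $b\ge\des(T)$ to the row-index bound via the recursion for the $t_i$---which the paper dispatches in a single sentence (``Existence is proved, and uniqueness follows from the lack of choice'') without spelling out why the reconstructed filling satisfies all the $\SSKT$ conditions.
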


\begin{proof}
  We may define a \emph{standardization map} from semi-standard key tableaux to standard key tableaux as follows. Given $T \in \SSKT(a)$, relabel the cells of $T$ from $1$ to $n$ in the following order: for $k$ from $1$ to $n$, relabel cells labeled $k$ from right to left in $T$. Since columns have distinct values, this is well-defined and necessarily results in a filling with distinct column entries and strictly decreasing rows. Moreover, it clearly preserves the property that if some entry $i$ is above and in the same column as an entry $k$ with $i<k$, then there is an entry immediately right of $k$, say $j$, and $i<j$. In particular, the result, denoted by $\std(T)$, lies in $\SKT(a)$.

  If $\std(T)=S$, then by construction $\wt(T)$ refines $\des(S)$. Conversely, we claim that given $S \in \SKT(a)$, for every weak composition $b$ such that $b\geq\des(S)$ and $\mathrm{flat}(b)$ refines $\mathrm{flat}(\des(S))$ as compositions, there is a unique $T \in \SSKT(a)$ with $\wt(T) = b$ such that $\std(T) = S$. From the claim, for $S \in \SKT(a)$, we have
  \begin{displaymath}
    \sum_{T \in \std^{-1}(S)} X^{\wt(T)} = \fund_{\des(S)}(X),
  \end{displaymath}
  and the result follows. To construct $T$ from $b$ and $S$, for $j = n,\ldots,1$, if $\des(S)_{j} = b_{i_{j-1} + 1} + \cdots + b_{i_{j}}$, then, from left to right, change each of the first $b_{i_{j-1} + 1}$ $j$'s to $i_{j-1} + 1$, the next $b_{i_{j-1} + 2}$ $j$'s to $i_{j-1} + 2$, and so on. Existence is proved, and uniqueness follows from the lack of choice. 
\end{proof}

%
\section{Nonsymmetric Macdonald polynomials}
%
\label{sec:hall}


Given a weak composition $a$, two cells of the key diagram for $a$ are \emph{attacking} if they lie in the same column or if they lie in adjacent columns with the cell on the left strictly higher than the cell on the right. A filling is \emph{non-attacking} if no two attacking cells have the same value and no cell in the first column exceeds its row index. Note that this latter condition is equivalent to adding a \emph{basement} in the sense of Haglund, Haiman, and Loehr \cite{HHL08}.

For example, the filling in Figure~\ref{fig:key-fill} is non-attacking as are the $\SSKT(0,2,1,2)$ in Figure~\ref{fig:SSKT}.

\begin{figure}[ht]
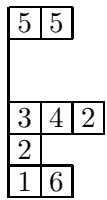

  \begin{displaymath}
    \vline\tableau{ 5 & 5 \\ \\ \\ 3 & 4 & 2 \\ 2 \\ 1 & 6 }
  \end{displaymath}
  \caption{\label{fig:key-fill}A non-attacking filling of the key diagram for $(2,1,3,0,0,2)$}
\end{figure}

The \emph{leg} of a cell of a key diagram is the number of cells weakly to its right. Given a non-attacking filling $T$, define $\maj(T)$ to be the sum of the legs of all cells $c$ such that the entry in $c$ is strictly greater than the entry immediately to its left. For example, the filling in Figure~\ref{fig:key-fill} has $\maj = 3$.

A \emph{triple} of a key diagram is a collection of three cells with two row adjacent and either (Type I) the third cell is above the left and the lower row is strictly longer, or (Type II) the third cell is below the right and the higher row is weakly longer. The \emph{orientation} of a triple is determined by reading the entries of the cells from smallest to largest. A \emph{co-inversion triple} is a Type I triple oriented clockwise or a Type II triple oriented counterclockwise. For an illustration, see Figure~\ref{fig:key-triple}. Given a non-attacking filling $T$, define $\coinv(T)$ to be the number of co-inversion triples of $T$. For example, the filling in Figure~\ref{fig:key-fill} has $\coinv = 2$.

\begin{figure}[ht]
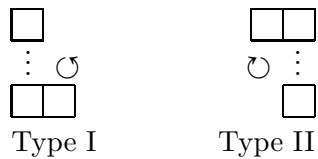

  \begin{displaymath}
    \begin{array}{l}
      \tableau{ \ } \\[-0.5\cellsize] \hspace{0.4\cellsize} \vdots \hspace{0.5\cellsize} \circlearrowleft \\ \tableau{ \ & \ } \\ \mbox{Type I}
    \end{array}
    \hspace{3\cellsize}
    \begin{array}{r}
      \tableau{ \ & \ } \\[-0.5\cellsize] \circlearrowright \hspace{0.5\cellsize} \vdots \hspace{0.4\cellsize} \\ \tableau{ \ } \\ \mbox{Type II}
    \end{array}
  \end{displaymath}
  \caption{\label{fig:key-triple}Co-inversion triples for key diagrams.}
\end{figure}

\begin{proposition}
  The set of non-attacking fillings of $a$ with $\maj(T)=\coinv(T)=0$ is $\SSKT(a)$.
  \label{prop:SSKT-D}
\end{proposition}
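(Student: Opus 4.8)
\emph{Proof idea.} I would prove the asserted set equality by two inclusions, working throughout with the augmented key diagram obtained by giving each row $i$ a basement cell in column $0$ with entry $i$; with this convention ``non\-/attacking,'' $\maj$, and $\coinv$ are literally the data of Haglund--Haiman--Loehr, and the first\-/column non\-/attacking condition becomes ``no cell attacks a basement cell.'' The first move is to peel off $\maj$. A cell whose entry strictly exceeds the one immediately to its left has leg at least one, so $\maj(T)=0$ exactly when $T$ has no such cell; for a non\-/attacking filling this says precisely that the rows weakly decrease (a strict rise in a column $\ge 2$ is directly forbidden, and one in column $1$ would make an entry exceed its basement value). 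Since the largest entry of a weakly decreasing row is its first\-/column entry, which is at most the row index, a non\-/attacking filling with $\maj(T)=0$ automatically has distinct column entries, weakly decreasing rows, and no entry above its row index --- all of the conditions defining $\SSKT(a)$ except the third clause of Definition~\ref{def:key-tableau}. Conversely, any $S\in\SSKT(a)$ has all of these, hence $\maj(S)=0$, and $S$ is non\-/attacking: were two cells in adjacent columns with the left one higher to share an entry $v$, then the cell of that left column in the (long enough) row of the shared right cell would exist, would lie to the left of the shared cell hence have entry $>v$, and would sit below the entry $v$ in its column, so the clause would force the entry just right of it --- the shared cell --- to exceed $v$. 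Thus everything reduces to showing, for fillings with distinct columns, weakly decreasing rows, and entries bounded by the row index, that the third clause of Definition~\ref{def:key-tableau} holds if and only if $\coinv(T)=0$.

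\emph{The clause forces $\coinv=0$.} Unwinding the orientation convention, a triple is a co\-/inversion triple precisely when its apex --- the cell not belonging to the row\-/adjacent pair --- carries the median of the three entries. Suppose such a triple exists. If it is of Type~I, its apex $w$ lies above the left cell $u$ of the pair, with right cell $v$; apex\-/median together with weak decrease of the pair's row forces $v<w<u$, but then $w$ is a smaller entry above $u$ in a column, so the clause forces the entry right of $u$, namely $v$, to exceed $w$ --- a contradiction. If it is of Type~II, its apex $w$ lies below the right cell $v$, the left cell is $u$, and the pair's row is weakly longer than $w$'s row; again $v<w<u$. Now $v$ is a smaller entry above $w$, so the clause produces a cell just right of $w$; because the pair's row is at least as long, the cell of that row in the new column exists and, by weak decrease, has entry $\le v$, hence lies strictly below the newly produced entry --- so the identical situation recurs one column to the right, forcing $w$'s row to be arbitrarily long, which is impossible. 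Hence $\coinv(T)=0$.

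\emph{$\coinv=0$ forces the clause.} Assume $\coinv(T)=0$ and suppose the clause fails: an entry $i$ sits directly above an entry $k>i$ in some column $\ell$, in rows $p'>p$, with no entry exceeding $i$ immediately right of the lower cell. If that cell exists and row $p$ is strictly longer than row $p'$, then $(p,\ell)$, that cell, and $(p',\ell)$ form a Type~I triple whose apex entry $i$ lies strictly between the other two --- a co\-/inversion triple. Otherwise row $p'$ is at least as long as row $p$, so columns $0,\dots,\ell$ meet both rows; starting from column $\ell$, where the lower entry exceeds the upper one, I would walk leftward, at column $m$ forming the Type~II triple made of the row\-/$p'$ pair in columns $m-1,m$ with apex the row\-/$p$ cell in column $m$ (valid because row $p'$ is the longer). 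Either that apex is the median --- a co\-/inversion triple --- or weak decrease together with distinctness of attacking entries carries the strict inequality to column $m-1$; since at column $1$ the basement entry $p'$ of row $p'$ exceeds the first\-/column entry of row $p$, the walk must terminate at a co\-/inversion triple. This contradiction establishes the clause, completing the proof.

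I expect the converse direction to be the main obstacle: while the idea --- a single Type~I triple, or a leftward chain of Type~II triples anchored at the basement --- is clean, one must check at each stage that the needed row\-/length hypothesis holds (``strictly longer'' for Type~I, ``weakly longer'' for Type~II), treat the basement cell carefully as a legitimate left end of a triple, and handle the case of equal entries within a row. That bookkeeping, not any single idea, is where the effort goes.
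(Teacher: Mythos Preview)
Your proposal is correct and follows essentially the same two-inclusion strategy as the paper: reduce $\maj=0$ to weakly decreasing rows (hence entries bounded by the row index), check non-attacking via the key clause, and then argue that under these hypotheses the third key-tableau clause is equivalent to $\coinv=0$. The only structural difference is in the converse direction: the paper selects the \emph{leftmost} violating pair $(i,k)$ and produces a single co-inversion triple in one step, whereas you walk leftward through a chain of Type~II triples anchored at the basement; the two are equivalent in spirit, and your version is in fact more careful about verifying the ``strictly longer / weakly longer'' row-length hypotheses before declaring a configuration a Type~I or Type~II triple.
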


\begin{proof}
  Suppose $T\in\SSKT(a)$. By definition, the first column of $T$ must have entries weakly smaller than the row index. Since columns are distinct, no attacking cells in the same column have the same values. Suppose cells $c,d$ are attacking, say with $c$ strictly above and in the column immediately left of $d$, and have the same value, say $i$. Since rows of $T$ weakly decrease, the entry immediately left of $d$ must be $k$ for some $k \geq i$. Since columns are distinct, we must have $k>i$. Then the condition on key tableaux forces $d$ to have entry greater than $i$, a contraction. Therefore $T$ is non-attacking. Since rows weakly decrease, $\maj(T)=0$. Given a Type I triple, say with $i$ in the top row and $k$ left of $j$ in the bottom row, we must have $k\geq j$ since rows weakly decrease, and the key tableaux condition forces either $i>k\geq j$ or $k \geq j > i$, in which case the orientation is clockwise and the triple is not a co-inversion triple. Given a Type II triple, say with $k$ left of $j$ in the top row and $i$ in the bottom row, we must have $k \geq j$. If $j>i$, then the orientation is counter-clockwise and this is not a co-inversion triple, and if $j<i$ then the lower row must be weakly longer than the higher, so this is not a triple. Therefore $\coinv(T)=0$ as well.

  Conversely, suppose $T$ is a non-attacking filling with $\maj(T)=\coinv(T)=0$. Since $\maj(T)=0$, rows weakly decrease. Since $T$ is non-attacking, columns are distinct and entries in the first column may not exceed their row index. Again using weakly decreasing rows, this means no entry in a row may exceed its row index. Finally, suppose that some entry $i$ is above and in the same column as an entry $k$ with $i<k$ and consider the leftmost such pair. If there is no entry immediately right of $k$, then $i$ and $k$ form the right column of a Type II inversion triple since the row of $i$ must be weakly longer than the row of $k$ and weakly decreasing rows ensures the entry left of $i$ (or its row index if it is in the first column) is greater than $i$ and the leftmost assumption ensure that is greater than the entry left of $k$, which is weakly greater than $k$. If there is an entry right of $k$, say $j$, and $i \geq j$, then these three cells form a Type I co-inversion triple. Therefore the key tableaux condition holds, so $T\in\SSKT(a)$.
\end{proof}

Haglund, Haiman and Loehr \cite{HHL08} proved that the nonsymmetric Macdonald polynomial $\mac_{a}(X;q,t)$ is the generating polynomial of non-attacking fillings of the key diagram for $a$ $q$-counted by $\maj$ and $t$-counted by $\coinv$ with an additional product term that collapses to $1$ when $t=0$.

\begin{definition}[\cite{HHL08}]
  The nonsymmetric Macdonald polynomial $\mac_{a}(X;q,t)$ is given by
  \begin{equation}
    \mac_{a}(X;q,t) = \sum_{\substack{T:a\rightarrow [n] \\ \mathrm{non-attacking}}} q^{\maj(T)} t^{\coinv(T)} X^{\wt(T)}
    \prod_{c \neq \mathrm{left}(c)} \frac{1-t}{1 - q^{\mathrm{leg}(c)+1} t^{\mathrm{arm}(c)+1}},
  \end{equation}
  where the product is over cells of the key diagram such that the cell to its left (or the row index) has a different entry, and $\mathrm{arm}(c)$ is the number of cells below $c$ in a weakly shorter row or above $c$ and one column to the left in a strictly shorter row.
  \label{def:mac}
\end{definition}

By Proposition~\ref{prop:SSKT-D}, we have a simple combinatorial proof that nonsymmetric Macdonald polynomials specialize to key polynomials. This was proved first by Ion \cite{Ion03}.

\begin{corollary}
  For a weak composition $a$, we have
  \begin{equation}
    \mac_a(X;0,0) = \key_a(X).
    \label{e:mac-key}
  \end{equation}
\end{corollary}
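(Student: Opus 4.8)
The plan is to specialize the Haglund--Haiman--Loehr formula of Definition~\ref{def:mac} directly, being careful to take $t=0$ before $q=0$, and then to quote Proposition~\ref{prop:SSKT-D} together with Proposition~\ref{prop:key-std}.

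First I would set $t=0$. In the product $\prod_{c\neq\mathrm{left}(c)}\frac{1-t}{1-q^{\mathrm{leg}(c)+1}t^{\mathrm{arm}(c)+1}}$, every factor has numerator $1-t$, which becomes $1$, and denominator $1-q^{\mathrm{leg}(c)+1}t^{\mathrm{arm}(c)+1}$; since $\mathrm{arm}(c)\geq 0$ the exponent $\mathrm{arm}(c)+1$ is at least $1$, so each denominator also becomes $1$. Hence the entire product specializes to $1$ term by term, with no $0/0$ ambiguity and independently of the value of $q$. At the same time $t^{\coinv(T)}$ becomes the indicator that $\coinv(T)=0$, so that
\[ \mac_a(X;q,0) = \sum_{\substack{T:a\rightarrow [n] \\ \mathrm{non-attacking} \\ \coinv(T)=0}} q^{\maj(T)}\,X^{\wt(T)}. \]
Then I would set $q=0$, which annihilates every term with $\maj(T)>0$ and leaves exactly a sum over non-attacking fillings $T$ of the key diagram of $a$ with $\maj(T)=\coinv(T)=0$, each weighted by $X^{\wt(T)}$. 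By Proposition~\ref{prop:SSKT-D} this index set is precisely $\SSKT(a)$, so $\mac_a(X;0,0)=\sum_{T\in\SSKT(a)}X^{\wt(T)}$, and by Proposition~\ref{prop:key-std} the right-hand side is $\key_a(X)$, as claimed.

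I expect no genuine obstacle here: the corollary is essentially a substitution combined with the two structural results already established. The only point worth flagging in the write-up is the order of the two specializations — taking $t=0$ first is what makes the product term visibly collapse to $1$, after which $q=0$ is immediate — so I would state that explicitly rather than attempting to evaluate the rational expression at $q=t=0$ simultaneously.
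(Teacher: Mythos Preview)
Your proof is correct and follows exactly the approach the paper indicates: the corollary is stated immediately after Proposition~\ref{prop:SSKT-D} with the remark that it gives a simple combinatorial proof, and your argument spells out precisely that deduction via Propositions~\ref{prop:SSKT-D} and~\ref{prop:key-std}. Your explicit note on specializing $t=0$ before $q=0$ to avoid any apparent $0/0$ in the product is a helpful clarification, but otherwise the two arguments are identical.
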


For our purposes, we will always take $t=0$ in which case the product becomes $1$, and the only terms that survive are those $T$ with $\coinv(T)=0$. This motivates the following definitions.

\begin{definition}
  The \emph{semi-standard key tabloids of shape $a$}, denoted by $\SSKD(a)$, are the non-attacking fillings with no co-inversion triples. The specialized nonsymmetric Macdonald polynomial $\mac_{a}(X;q,0)$ is given by
  \begin{equation}
    \mac_{a}(X;q,0) = \sum_{T \in \SSKD(a)} q^{\maj(T)} X^{\wt(T)} .
  \end{equation}
  \label{def:nsym-hall}
\end{definition}

\begin{figure}[ht]
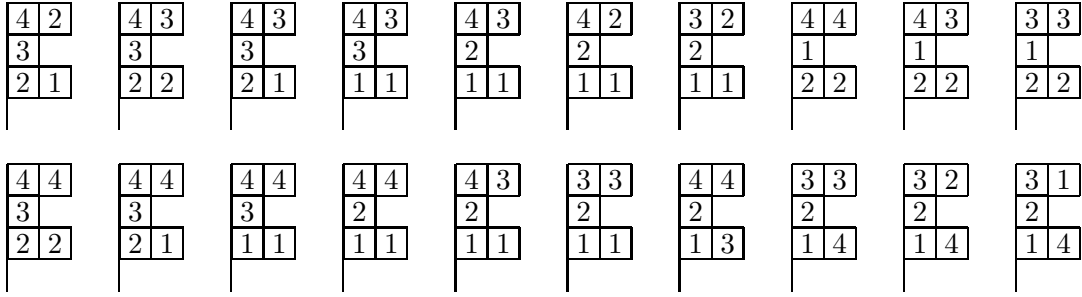

  \begin{displaymath}
    \begin{array}{c@{\hskip 1.5\cellsize}c@{\hskip 1.5\cellsize}c@{\hskip 1.5\cellsize}c@{\hskip 1.5\cellsize}c@{\hskip 1.5\cellsize}c@{\hskip 1.5\cellsize}c@{\hskip 1.5\cellsize}c@{\hskip 1.5\cellsize}c@{\hskip 1.5\cellsize}c}
      \vline\tableau{4 & 2 \\ 3 \\ 2 & 1 \\ & } & 
      \vline\tableau{4 & 3 \\ 3 \\ 2 & 2 \\ & } & 
      \vline\tableau{4 & 3 \\ 3 \\ 2 & 1 \\ & } & 
      \vline\tableau{4 & 3 \\ 3 \\ 1 & 1 \\ & } & 
      \vline\tableau{4 & 3 \\ 2 \\ 1 & 1 \\ & } & 
      \vline\tableau{4 & 2 \\ 2 \\ 1 & 1 \\ & } & 
      \vline\tableau{3 & 2 \\ 2 \\ 1 & 1 \\ & } & 
      \vline\tableau{4 & 4 \\ 1 \\ 2 & 2 \\ & } &
      \vline\tableau{4 & 3 \\ 1 \\ 2 & 2 \\ & } & 
      \vline\tableau{3 & 3 \\ 1 \\ 2 & 2 \\ & } \\ \\ 
      \vline\tableau{4 & 4 \\ 3 \\ 2 & 2 \\ & } & 
      \vline\tableau{4 & 4 \\ 3 \\ 2 & 1 \\ & } & 
      \vline\tableau{4 & 4 \\ 3 \\ 1 & 1 \\ & } & 
      \vline\tableau{4 & 4 \\ 2 \\ 1 & 1 \\ & } & 
      \vline\tableau{4 & 3 \\ 2 \\ 1 & 1 \\ & } & 
      \vline\tableau{3 & 3 \\ 2 \\ 1 & 1 \\ & } &
      \vline\tableau{4 & 4 \\ 2 \\ 1 & 3 \\ & } & 
      \vline\tableau{3 & 3 \\ 2 \\ 1 & 4 \\ & } & 
      \vline\tableau{3 & 2 \\ 2 \\ 1 & 4 \\ & } & 
      \vline\tableau{3 & 1 \\ 2 \\ 1 & 4 \\ & } 
    \end{array}
  \end{displaymath}
  \caption{\label{fig:SSKD}The semi-standard key tabloids of shape $(0,2,1,2)$.}
\end{figure}

For example, the $20$ semi-standard key tabloids of shape $(0,2,1,2)$ are given in Figure~\ref{fig:SSKD}. By Proposition~\ref{prop:SSKT-D}, every semi-standard key tableau is a semi-standard key tabloid, as evidenced by the fact that the first $16$ tabloids in Figure~\ref{fig:SSKD} are the $\SSKT(0,2,1,2)$ from Figure~\ref{fig:SSKT}. 

Our next task is to turn this monomial expansion into a fundamental slide expansion. To that end, we have the following definition. Note that we do not assume that the filling is non-attacking.

A \emph{standard key tabloid} is a bijective filling of a key diagram with no co-inversion triples. For example, there are ten standard key tabloids of shape $(0,2,1,2)$ as shown in Figure~\ref{fig:SKD}. Note that the first five are the standard key tableaux of shape $(0,2,1,2)$ shown in Figure~\ref{fig:SKT}.

\begin{figure}[ht]
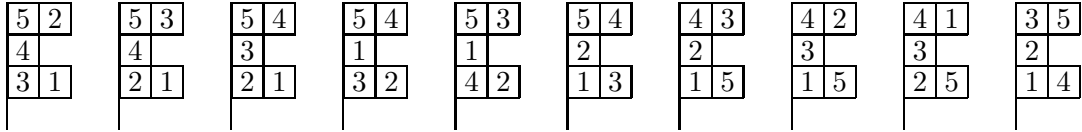

  \begin{displaymath}
    \begin{array}{c@{\hskip 1.5\cellsize}c@{\hskip 1.5\cellsize}c@{\hskip 1.5\cellsize}c@{\hskip 1.5\cellsize}c@{\hskip 1.5\cellsize}c@{\hskip 1.5\cellsize}c@{\hskip 1.5\cellsize}c@{\hskip 1.5\cellsize}c@{\hskip 1.5\cellsize}c}
      \vline\tableau{5 & 2 \\ 4 \\ 3 & 1 \\ & } &
      \vline\tableau{5 & 3 \\ 4 \\ 2 & 1 \\ & } &
      \vline\tableau{5 & 4 \\ 3 \\ 2 & 1 \\ & } &
      \vline\tableau{5 & 4 \\ 1 \\ 3 & 2 \\ & } &
      \vline\tableau{5 & 3 \\ 1 \\ 4 & 2 \\ & } &
      \vline\tableau{5 & 4 \\ 2 \\ 1 & 3 \\ & } & 
      \vline\tableau{4 & 3 \\ 2 \\ 1 & 5 \\ & } & 
      \vline\tableau{4 & 2 \\ 3 \\ 1 & 5 \\ & } & 
      \vline\tableau{4 & 1 \\ 3 \\ 2 & 5 \\ & } & 
      \vline\tableau{3 & 5 \\ 2 \\ 1 & 4 \\ & }
    \end{array}
  \end{displaymath}
  \caption{\label{fig:SKD}The standard key tabloids of shape $(0,2,1,2)$.}
\end{figure}

The following characterization follows from the proof of Proposition~\ref{prop:SSKT-D}.

\begin{proposition}
  The set of standard key tabloids $T$ of shape $a$ with $\maj(T)=0$ is $\SKT(a)$.
  \label{prop:SKT-D}  
\end{proposition}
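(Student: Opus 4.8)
The plan is to prove the asserted equality of sets by two inclusions, in each case extracting from the proof of Proposition~\ref{prop:SSKT-D} exactly the part that does not rely on the non-attacking hypothesis and replacing the semi-standard hypotheses by bijectivity.

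First I would treat $\SKT(a)\subseteq\{\,T \text{ a standard key tabloid} : \maj(T)=0\,\}$. If $T\in\SKT(a)$ then $T$ is a bijective filling whose rows strictly decrease (as noted just after Definition~\ref{def:key-tableau}), so no cell has an entry exceeding the entry immediately to its left and hence $\maj(T)=0$. That $T$ has no co-inversion triples is precisely the content of the first paragraph of the proof of Proposition~\ref{prop:SSKT-D}: the case analysis there of Type~I and Type~II triples invokes only the three defining conditions of a key tableau---distinct columns, weakly decreasing rows, and the key condition---and never the non-attacking hypothesis. Hence $T$ is a standard key tabloid with $\maj(T)=0$.

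For the reverse inclusion I would take a standard key tabloid $T$ with $\maj(T)=0$ and recover the key-tableau axioms one at a time. Since $\maj(T)=0$ the rows of $T$ weakly decrease, and since $T$ is bijective its rows in fact strictly decrease and its columns automatically have distinct entries. For the remaining (key) condition I would argue by contradiction exactly as in the last paragraph of the proof of Proposition~\ref{prop:SSKT-D}: were it to fail, choose the leftmost column in which some entry $i$ lies above an entry $k$ with $i<k$; if there is no cell immediately right of $k$, then $i$, $k$ and the cell immediately left of $i$---or, when $i$ is in the first column, the basement cell of its row, whose entry is the row index---form a Type~II co-inversion triple, whereas if a cell $j$ immediately right of $k$ satisfies $j<i$, then $i$, $k$, $j$ form a Type~I co-inversion triple. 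Either outcome contradicts $\coinv(T)=0$, so the key condition holds and $T\in\SKT(a)$.

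The step I expect to be the main obstacle is the first-column subcase of this last argument. A standard key tabloid, unlike a semi-standard key tableau, may have entries in the first column exceeding their row index, so one cannot simply invoke the semi-standard version; instead one must check directly that, even then, the basement cell of the relevant row still sits in the correct position and has the relative size needed for the triple formed by $i$, $k$, and that basement cell to be a co-inversion triple rather than an inversion triple. Once that is in hand, everything else is a verbatim adaptation of the proof of Proposition~\ref{prop:SSKT-D}, with ``bijective'' standing in for ``no entry exceeds its row index'' at each point where the latter was used.
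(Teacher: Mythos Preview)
Your proposal is correct and matches the paper's approach exactly: the paper simply asserts that Proposition~\ref{prop:SKT-D} ``follows from the proof of Proposition~\ref{prop:SSKT-D},'' which is precisely the extraction you describe, and your identification of the first-column subcase as the only place requiring genuine care is accurate. One sharpening: the difficulty there is not merely that a first-column entry may exceed its row index, but that the basement value $r_i$ may fall strictly between $i$ and $k$, in which case the triple $(r_i,i,k)$ is \emph{not} a co-inversion triple and you must instead invoke another pair in that column (or the tie-breaking convention from \cite{HHL08} when $r_i$ coincides with a filling entry); once that is handled, the rest is indeed verbatim.
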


We use standard objects to collect together terms for the fundamental slide expansion. Using Definition~\ref{def:weak-des}, we may associate a weak descent composition to each standard key tabloid. For example, latter $5$ standard key tabloids of shape $(0,2,1,2)$ shown in Figure~\ref{fig:SKD} have weak descent compositions $(1,1,1,2)$, $(1,1,2,1)$, $(1,2,1,1)$, $(2,1,1,1)$, $\varnothing$, respectively. Correspondingly, we have
\begin{eqnarray*}
  \mac_{(0,2,1,2)}(X;q,0) & = & \fund_{(1,2,1,1)} + \fund_{(0,2,2,1)} + \fund_{(0,2,1,2)} + \fund_{(1,2,0,2)}  \\
  & & q \left( \fund_{(1,1,1,2)} + \fund_{(1,1,2,1)} + \fund_{(1,2,1,1)} + \fund_{(2,1,1,1)} \right) .
\end{eqnarray*}
We prove that this holds in general, that is, that specialized nonsymmetric Macdonald polynomials are the fundamental slide generating polynomials of standard key tabloids.

\begin{theorem}
  The specialized nonsymmetric Macdonald polynomial $\mac_{a}(X;q,0)$ is given by
  \begin{equation}
    \mac_{a}(X;q,0) = \sum_{T \in \SKD(a)} q^{\maj(T)} \fund_{\des(T)}(X) ,
  \end{equation}
  where $\SKD(a)$ denotes the standard key tabloids of shape $a$.
  \label{thm:hall-slide}
\end{theorem}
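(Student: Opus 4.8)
The plan is to build a standardization map on semi-standard key tabloids that parallels the one used in the proof of Proposition~\ref{prop:key-std}, verify it lands in $\SKD(a)$, preserves $\maj$, and has fibers whose monomial generating functions are exactly the fundamental slide polynomials. Concretely, given $T \in \SSKD(a)$, I would relabel the cells $1,\ldots,n$ by the rule: for $k$ from $1$ to $n$, relabel the cells containing $k$ from right to left. Call the result $\std(T)$. Since attacking cells (in particular cells in a common column) carry distinct entries in a non-attacking filling, this is well-defined and produces a bijective filling with strictly decreasing rows.

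The first substantive step is to check that $\std(T) \in \SKD(a)$, i.e. that standardization creates no co-inversion triples. This requires a careful case analysis on Type I and Type II triples in $\std(T)$, tracing back to the corresponding three cells in $T$: if the three original entries are distinct, the triple orientation is unchanged; if two of them are equal in $T$, one must argue, using non-attacking-ness and the right-to-left relabeling order, that the resulting orientation in $\std(T)$ is the non-co-inversion one. This is the same style of argument as in Proposition~\ref{prop:SSKT-D}, just applied to a general (not necessarily weakly-decreasing-rows) filling, so I expect it to go through but it is the part that needs the most care. The second step is to show $\maj(\std(T)) = \maj(T)$: a descent (an entry strictly greater than the one immediately to its left) is created or destroyed only when two row-adjacent equal entries get relabeled, and the right-to-left rule puts the larger label on the left, so no new descents appear; conversely a strict descent $c > \mathrm{left}(c)$ in $T$ remains a strict descent after relabeling because distinct values map to distinct values with the same relative order within each row. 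Since $\maj$ depends only on the descent set and the shape (legs are shape data), $\maj$ is preserved.

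The third step is the fiber count: for a fixed $S \in \SKD(a)$, I claim $\sum_{T \in \std^{-1}(S) \cap \SSKD(a)} X^{\wt(T)} = \fund_{\des(S)}(X)$. The construction of $T$ from $S$ and a target weight $b$ is identical to the one in Proposition~\ref{prop:key-std}: break $n\cdots21$ into the blocks determined by the weak descent composition of $S$, and within each block relabel the entries from left to right according to the chosen distribution of $b$ over that block. The content that must be verified here is exactly that the valid choices of $b$ — those making the resulting filling non-attacking with no co-inversion triples — are precisely the $b$ with $b \geq \des(S)$ and $\mathrm{flat}(b)$ refining $\mathrm{flat}(\des(S))$, matching equation~\eqref{e:slide}. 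The inequality $b \geq \des(S)$ and the refinement condition come from how the block structure of $n\cdots21$ interacts with the first-column row constraint in Definition~\ref{def:weak-des}, and the ``no co-inversion triples'' condition is automatically inherited from $S$ by the argument of Step~1 run in reverse (flattening distinct entries within a block to equal values cannot turn a non-co-inversion triple into a co-inversion one, again by the right-to-left ordering). Summing over $S \in \SKD(a)$ and invoking $\maj(\std(T)) = \maj(T)$ then yields
\[ \mac_a(X;q,0) = \sum_{T \in \SSKD(a)} q^{\maj(T)} X^{\wt(T)} = \sum_{S \in \SKD(a)} q^{\maj(S)} \fund_{\des(S)}(X), \]
where terms with $\des(S) = \varnothing$ vanish by the convention $\fund_{\varnothing} = 0$, consistent with the virtual standard key tabloids (such as the last one in Figure~\ref{fig:SKD}) contributing nothing.

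The main obstacle I anticipate is Step~1 — showing standardization does not introduce co-inversion triples — together with the reverse direction in Step~3, because the filling is not assumed non-attacking in the definition of standard key tabloid, so one cannot lean on weakly decreasing rows and must instead argue directly from the triple orientations and the right-to-left relabeling convention. Everything else (well-definedness, $\maj$-preservation, the fiber bijection mechanics) is a direct adaptation of the already-established Proposition~\ref{prop:key-std}.
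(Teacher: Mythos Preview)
Your plan is essentially the paper's own argument: build a standardization map $\std:\SSKD(a)\to\SKD(a)$, check that it preserves $\maj$ and the absence of co-inversion triples, and then identify each fiber with the monomials of a single fundamental slide polynomial exactly as in Proposition~\ref{prop:key-std}. Two remarks are worth making.

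First, your assertion that $\std(T)$ has ``strictly decreasing rows'' is false: semi-standard key tabloids may have increasing row segments (for instance the rows $1\ 3$ and $1\ 4$ in Figure~\ref{fig:SSKD}), and standardization preserves those ascents. Fortunately you never use this claim downstream --- what you actually need, and what your $\maj$ argument establishes, is only that the \emph{set of ascent cells} (cells whose entry strictly exceeds the entry to their left) is unchanged. So the slip is harmless, but drop the claim.

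Second, your right-to-left relabeling convention is the correct one and agrees with Proposition~\ref{prop:key-std}. The paper's proof in fact writes ``left to right,'' but that appears to be a slip: left-to-right would send a pair of row-adjacent equal entries $k\ k$ to an ascent $i\ i{+}1$, creating a new $\maj$ contribution and, in the running example, producing fillings with co-inversion triples (try it on the tabloid with rows $4\ 4\,/\,3\,/\,2\ 2$ in Figure~\ref{fig:SSKD}). Your direction is the one consistent with the examples in Figures~\ref{fig:SSKD} and~\ref{fig:SKD} and with the paper's own claim that the ascent set is preserved. Beyond this, the level of detail in your Steps~1 and~3 matches what the paper provides.
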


\begin{proof}
  Reversing the proof of Proposition~\ref{prop:key-std}, define a \emph{standardization map}, denoted by $\std$, from semi-standard key tabloids to standard key tabloids as follows. Given $T \in \SSKD(a)$, relabel the cells of $T$ from $1$ to $n$ in the following order: for $k$ from $1$ to $n$, relabel cells labeled $k$ from left to right in $T$. The non-attacking condition ensures that columns have distinct values, therefore this is well-defined and necessarily results in a filling with distinct column entries. Moreover, by reading left to right beginning with the smallest entries, we ensure that the entry of a cell $c$ is strictly greater than the entry of the cell to its left in $T$ if and only the entry of a is strictly greater than the entry of the cell to its left in $\std(T)$. In particular, $\maj(\std(T)) = \maj(T)$. Given any triple, this observation together with distinct column values ensures that the entries form a co-inversion triple for $T$ if and only if they do for $\std(T)$, and thus $\std(T)\in\SKD(a)$. Exactly as in the proof of Proposition~\ref{prop:key-std}, we conclude that for any $S \in \SKD(a)$, we have
  \begin{displaymath}
    \sum_{T \in \std^{-1}(S)} q^{\maj(T)} X^{\wt(T)} = q^{\maj(S)}\fund_{\des(S)}(X),
  \end{displaymath}
  from which the result follows. 
\end{proof}

%
\section{Weak dual equivalence}
%
\label{sec:dual}

Analogous to our use of standardization in Theorem~\ref{thm:hall-slide} to group together monomials into fundamental slide polynomials, we use weak dual equivalence to group together fundamental slide polynomials into key polynomials.

Weak dual equivalence, introduced by Assaf in \cite{Ass-1}, provides a general framework for proving that a positive sum of fundamental slide polynomials is key positive. This generalizes dual equivalence \cite{Ass07,Ass15} which provides the analogous framework to prove Schur positivity of a function expressed in terms of Gessel's fundamental quasisymmetric functions. 

Given a set of combinatorial objects $\mathcal{A}$ endowed with a notion of weak descents, we consider the fundamental slide generating polynomial for $\mathcal{A}$ given by
\begin{displaymath}
  \sum_{T \in \mathcal{A}} \fund_{\des(T)} .
\end{displaymath}
Weak dual equivalence collects together terms into equivalence classes, each of which is a single key polynomial. We recall the main definitions and theorems from \cite{Ass-1}.

\begin{definition}[\cite{Ass-1}]
  Let $\mathcal{A}$ be a finite set, and let $\des$ be a map from $\mathcal{A}$ to weak compositions of $n$. A \emph{weak dual equivalence for $(\mathcal{A},\des)$} is a family of involutions $\{\psi_i\}_{1<i<n}$ on $\mathcal{A}$ such that
  \renewcommand{\theenumi}{\roman{enumi}}
  \begin{enumerate}
  \item For all $i-h \leq 3$ and all $T \in \mathcal{A}$, there exists a weak composition $a$ of $i-h+3$ such that
    \[ \sum_{U \in [T]_{(h,i)}} \fund_{\des_{(h-1,i+1)}(U)} = \key_{a}, \]
    where $[T]_{(h,i)}$ is the equivalence class generated by $\psi_h,\ldots,\psi_i$, and $\des_{(h,i)}(T)$ deletes the first $h-1$ and last $n-i$ nonzero parts from $\des(T)$.
    
  \item For all $|i-j| \geq 3$ and all $T \in\mathcal{A}$, we have $\psi_{j} \psi_{i}(T) = \psi_{i} \psi_{j}(T)$.

  \end{enumerate}

  \label{def:deg-weak}
\end{definition}

That is, a weak dual equivalence is a family of involutions for which equivalence classes of degree up to $6$ correspond to key polynomials and which commute when indices are far apart. 

The key polynomial $\key_a$ is \emph{$\fund$-stable} if both $\key_a$ and $\key_{0^m \times a}$ have the same number of terms in their fundamental slide expansions for any (equivalently some) $m>0$.

\begin{theorem}[\cite{Ass-1}]
  Let $\mathcal{A}$ be a set of combinatorial objects for which $\des$ is never $\varnothing$ (i.e. $\mathcal{A}$ has no virtual elements). If $\{\psi_i\}$ is a stable weak dual equivalence for $(\mathcal{A},\des)$, and $U \in \mathcal{A}$, then
  \begin{equation}
    \sum_{T \in [U]} \fund_{\des(T)} \ = \ \key_{a}
  \end{equation}
  for some $a$. In particular, the fundamental slide generating polynomial for $\mathcal{A}$ is key positive.
  \label{thm:positivity-key}
\end{theorem}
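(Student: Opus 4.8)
The statement to prove is Theorem~\ref{thm:positivity-key}: given a set $\mathcal{A}$ with no virtual elements carrying a stable weak dual equivalence $\{\psi_i\}$, each equivalence class $[U]$ has fundamental slide generating polynomial equal to a single key polynomial $\key_a$.

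\textbf{Plan of proof.}

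The plan is to build this from the two axioms in Definition~\ref{def:deg-weak} by an induction on $n$ together with a careful analysis of how the classes generated by $\psi_2, \ldots, \psi_{n-1}$ decompose into classes generated by a shorter range of involutions. The base case is $n \leq 4$ (or some small bound), where axiom (i) applied with $h = 2$, $i = n-1$ already asserts directly that $\sum_{T \in [U]_{(2,n-1)}} \fund_{\des_{(1,n)}(U)} = \key_a$; since $\des_{(1,n)}$ deletes no parts, this is exactly the claim. For the inductive step I would first restrict attention to the involutions $\psi_2, \ldots, \psi_{n-2}$ (dropping the top one). By induction applied to the truncation $\des_{(1,n-1)}$ — i.e.\ forgetting the last nonzero part of every descent composition — each class under $\langle \psi_2, \ldots, \psi_{n-2}\rangle$ has truncated fundamental slide generating polynomial equal to some $\key_b$ in $n-1$-worth of variables. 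The stability hypothesis is what licenses passing between the truncated picture and the full picture: $\fund$-stability of $\key_b$ says the number of terms (hence, with a compatibility check, the terms themselves) in the slide expansion is unchanged under prepending zeros, which is the mechanism that lets one ``re-inflate'' the deleted part. Symmetrically one runs the argument dropping $\psi_2$ instead and truncating the first nonzero part via $\des_{(2,n)}$.

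The heart of the argument is then to reconcile the two resulting decompositions of $[U]$ — the one coming from $\langle \psi_2,\dots,\psi_{n-2}\rangle$-classes and the one from $\langle \psi_3,\dots,\psi_{n-1}\rangle$-classes — into a statement about the full class $[U] = [U]_{(2,n-1)}$. Here the commutation axiom (ii) is essential: because $\psi_i$ and $\psi_j$ commute when $|i-j|\geq 3$, the overlap structure of these sub-ranges is controlled, and one can argue that the full class is glued together from pieces each of which is, by induction plus axiom (i) in the small-degree cases, a key polynomial, and that these pieces assemble into a single key polynomial rather than a sum of several. Concretely, I expect to use the known recursive/branching structure of key polynomials (the Demazure operator recursion, which underlies why $\key_a$ restricted appropriately gives $\key_b$'s) to match the combinatorial gluing on the $\mathcal{A}$ side with the algebraic gluing on the key-polynomial side. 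The no-virtual-elements hypothesis ($\des \neq \varnothing$ on all of $\mathcal{A}$) is used throughout to ensure no terms silently vanish, so that ``same number of terms'' arguments from stability are valid.

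\textbf{Main obstacle.} The hardest part will be the gluing/reconciliation step: showing that the two families of sub-classes, each individually summing to key polynomials by induction, patch together so that the whole class $[U]$ sums to exactly \emph{one} $\key_a$, with the correct $a$. This requires a genuine structural input — presumably an induction-friendly characterization of when a sum of key polynomials is again a single key polynomial, combined with the axiom (i) ``local'' data at degrees $\leq 6$ pinning down the shape at the boundary. Managing the bookkeeping of which nonzero parts get deleted by $\des_{(h,i)}$ and how stability reinstates them — ensuring the variable counts and zero-padding line up on both sides — is where the real care is needed, and is likely where the proof in \cite{Ass-1} does its substantive work; here I would simply invoke Theorem~\ref{thm:positivity-key} as the cited black box from \cite{Ass-1} rather than reprove it.
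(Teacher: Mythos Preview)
The paper does not prove this theorem: it is stated with the attribution \texttt{[\textbackslash cite\{Ass-1\}]} and no proof follows in the text. It is used purely as a black box imported from \cite{Ass-1}, exactly as you yourself note in your final sentence. So there is nothing to compare your sketch against here; your closing remark --- invoke the cited result rather than reprove it --- \emph{is} the paper's ``proof.'' The inductive gluing outline you give is a plausible-sounding roadmap for how such a theorem might be established, but since the present paper neither carries out nor needs that argument, the sketch is extraneous for these purposes.
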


While the non-virtual condition might appear restrictive, in practice it may be ignore whenever the family of polynomials being considered behaves well under stabilization, which is the case for nonsymmetric Macdonald polynomials.

We recall the weak dual equivalence involutions for standard key tableaux defined in \cite{Ass-1}, and apply them to tabloids as well. For this definition, column reading order begins in the leftmost column, reading bottom to top, then continuing right.

\begin{definition}
  Given a bijective filling $T$ of a key diagram, define $\psi_i(T)$ as follows. Let $b,c,d$ be the cells with entries $i-1,i,i+1$ taken in column reading order. Then
  \begin{equation}
    \psi_i (T) = \left\{ \begin{array}{rl} 
      T & \mbox{if $c$ has entry $i$}, \\
      \braid_{i} (T) & \mbox{else if $b,d$ are attacking or row adjacent} , \\
      \swap_{i-1}(T) & \mbox{else if $c$ has entry $i+1$} , \\
      \swap_{i}  (T) & \mbox{else if $c$ has entry $i-1$} , 
    \end{array} \right.
  \end{equation}
  where $\braid_{j}$ cycles $j-1,j,j+1$ so that $j$ is not in position $c$ and $\swap_j$ interchanges $j$ and $j+1$.
  \label{def:deg-key}
\end{definition}

\begin{remark}
  In \cite{Ass-1}, the involutions on standard key tableaux apply $\braid_i$ precisely when $b,d$ are in the same row and $c$ is not. However, when $T$ is a standard key tableau, since $b$ and $d$ differ by $1$, they cannot be attacking, so they must be in the same row. Conversely, since rows weakly decrease, if $b$ and $d$ are in the same row then they must be adjacent in their row. Therefore these definitions agree when $T$ is a standard key tableau.
\end{remark}

For examples of $\braid_{i}$, $\swap_{i}$, and $\psi_i$ on standard key tabloids, see Figure~\ref{fig:weak-de}. Notice that these involutions exactly groups terms into equivalence classes corresponding to key polynomials, 
\[ \mac_{(0,2,1,2)}(X;q,0) = \key_{(0,2,1,2)}(X) + q \key_{(1,1,1,2)}(X) .\]

\begin{figure}[ht]
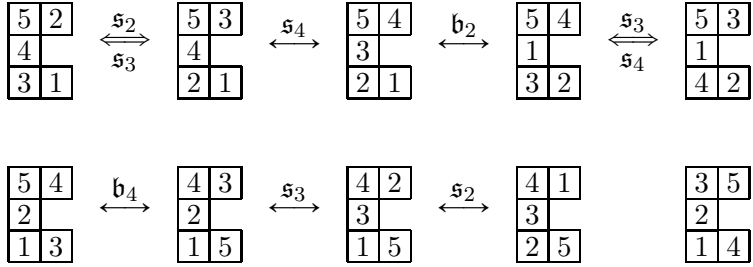

  \begin{displaymath}
    \begin{array}{ccccccccc}
      \tableau{5 & 2 \\ 4 \\ 3 & 1} & \raisebox{-\cellsize}{$\stackrel{\displaystyle\stackrel{\displaystyle\swap_2}{\Longleftrightarrow}}{\swap_3}$}  &
      \tableau{5 & 3 \\ 4 \\ 2 & 1} & \raisebox{-0.5\cellsize}{$\stackrel{\displaystyle\swap_{4}} \longleftrightarrow$} &
      \tableau{5 & 4 \\ 3 \\ 2 & 1} & \raisebox{-0.5\cellsize}{$\stackrel{\displaystyle\braid_{2}} \longleftrightarrow$} &
      \tableau{5 & 4 \\ 1 \\ 3 & 2} & \raisebox{-\cellsize}{$\stackrel{\displaystyle\stackrel{\displaystyle\swap_3}{\Longleftrightarrow}}{\swap_4}$}  &
      \tableau{5 & 3 \\ 1 \\ 4 & 2} \\ \\ \\
      \tableau{5 & 4 \\ 2 \\ 1 & 3} & \raisebox{-0.5\cellsize}{$\stackrel{\displaystyle\braid_{4}}\longleftrightarrow$} &
      \tableau{4 & 3 \\ 2 \\ 1 & 5} & \raisebox{-0.5\cellsize}{$\stackrel{\displaystyle\swap_{3}} \longleftrightarrow$} &
      \tableau{4 & 2 \\ 3 \\ 1 & 5} & \raisebox{-0.5\cellsize}{$\stackrel{\displaystyle\swap_{2}} \longleftrightarrow$} &
      \tableau{4 & 1 \\ 3 \\ 2 & 5} & &
      \tableau{3 & 5 \\ 2 \\ 1 & 4}
    \end{array}
  \end{displaymath}
  \caption{\label{fig:weak-de}Weak dual equivalence on the standard key tabloids of shape $(2,1,2)$}
\end{figure}

Before proving that these involutions give a weak dual equivalence for $\SKT(a)$, we note that they are well-defined and preserve the $\maj$ and $\coinv$ statistics. In fact, they preserve the set of cells for which the value exceeds that of the value to its left (or the row index for cells in the first column). 

\begin{lemma}
  Given a standard filling $T$ of a key diagram, we have $\maj(T) = \maj(\psi_i(T))$ and $\coinv(T) = \coinv(\psi_i(T))$.
  \label{lem:stats}
\end{lemma}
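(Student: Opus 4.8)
The plan is to prove the invariance of $\maj$ and $\coinv$ under $\psi_i$ by analyzing the three nontrivial cases of Definition~\ref{def:deg-key} separately, in each case tracking precisely which cells have an entry strictly larger than the cell to their left (or, for first-column cells, strictly larger than the row index). Call a cell a \emph{descent cell} of $T$ if it has this property, and let $\mathrm{D}(T)$ denote the set of descent cells. Since $\maj(T)$ is the sum of the legs of the descent cells and a co-inversion triple is determined purely by the relative order of three entries in a fixed Type I or Type II configuration, it suffices to show that $\psi_i$ fixes $\mathrm{D}(T)$ as a set of positions, and that $\psi_i$ preserves the relative order of every triple of entries. Both will follow once we observe that the only entries whose relative position changes are among $\{i-1,i,i+1\}$, and that in each case the move is designed precisely so that the comparisons ``is $x$ bigger than its left neighbor?'' are unaffected.

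First I would dispose of the $\swap$ cases. Suppose $\psi_i(T) = \swap_{i-1}(T)$, so $c$ (the cell holding $\min$ among the three in column-reading order that is distinguished in the definition) has entry $i+1$, and $b,d$ are neither attacking nor row adjacent; interchanging $i-1$ and $i$. The hypothesis that $b$ and $d$ are not attacking and not row adjacent is exactly what guarantees that the cell holding $i-1$ and the cell holding $i$ are not in the same row, not in the same column, and not in horizontally adjacent columns in a way that would create a comparison between them; hence no descent cell among the cells holding $i-1, i$, or their neighbors changes status, because swapping two values that differ by exactly $1$ can only affect a comparison ``$i-1$ vs.\ left neighbor'' or ``$i$ vs.\ left neighbor'' or ``left-of-$(i{-}1)$ vs.\ $i{-}1$'' etc., and the non-adjacency hypothesis rules out the left neighbor of one being the other. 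A symmetric argument handles $\psi_i(T)=\swap_i(T)$. For the triples: since $i-1$ and $i$ occupy cells that are not attacking, they cannot both lie in any single triple's configuration in a way whose orientation depends on distinguishing them, and against any third entry $x\notin\{i-1,i\}$ the comparisons with $x$ are unchanged because $x$ is on the same side of both $i-1$ and $i$; so every triple keeps its orientation.

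Next I would handle the braid case $\psi_i(T)=\braid_i(T)$, where $b$ and $d$ (entries $i-1$ and $i+1$) are attacking or row adjacent and $c$ does not have entry $i$, and $\braid_i$ cyclically permutes $\{i-1,i,i+1\}$ so that $i$ avoids position $c$. Here the three relevant cells are mutually close, so I would argue directly: in a standard filling, if $b,d$ are attacking they lie in one column (or adjacent columns with the left one higher), and the cell $c$ holding the middle-in-reading-order value is elsewhere; the cycle moves the three values $i-1,i,i+1$ among their three cells but their positions \emph{as a set} are fixed, and one checks that after the cycle each of these three cells still either is or is not a descent cell exactly as before — this is the one spot requiring a short case check on the six possible configurations of $b,c,d$, but in each the key point is that the neighbor-to-the-left of each of the three cells is either outside $\{i-1,i,i+1\}$ (so its value is unchanged and is compared against a value that is on the same side of all of $i-1,i,i+1$) or is one of the other two of the three cells, in which case the attacking/row-adjacent hypothesis pins down the configuration and one verifies the comparison outcome is preserved by the particular choice of cycle. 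The same configuration check shows triples are preserved: any triple meeting $\{b,c,d\}$ in at most two cells compares the third, outside entry against values all on one side of $\{i-1,i,i+1\}$, and a triple contained in $\{b,c,d\}$ has its orientation determined by the (unchanged) set of positions together with the cyclic order, which the braid move respects.

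The main obstacle I expect is the braid case: unlike the $\swap$ cases, where a single non-adjacency hypothesis cleanly isolates the move, the braid move genuinely permutes three nearby entries, so one must enumerate the possible relative positions of $b,c,d$ (two attacking columns vs.\ one row, and the location of $c$ relative to the column of $b,d$) and in each confirm both that $\mathrm{D}(T)$ is preserved and that no triple flips orientation. I would organize this as a short lemma-internal case analysis, emphasizing that in a \emph{standard} filling consecutive values $i-1$ and $i+1$ that are ``close'' have very limited geometry, and that $\braid_i$ is defined exactly so the descent pattern is forced to be invariant. Once $\mathrm{D}(T)$ and all triple orientations are shown invariant in all three cases, $\maj(T)=\maj(\psi_i(T))$ follows since legs depend only on the shape (which is unchanged), and $\coinv(T)=\coinv(\psi_i(T))$ follows since co-inversion triples are counted by orientation.
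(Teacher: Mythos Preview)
Your plan for $\maj$ is essentially the paper's: both of you show that the set of descent cells is unchanged by $\psi_i$, with the swap case handled by non-adjacency of the two moved values and the braid case handled by a short enumeration of the row-adjacent configurations (the paper's Figure~\ref{fig:weak-triple}). That part is fine.

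The genuine gap is in your $\coinv$ argument for the braid case. You write that ``any triple meeting $\{b,c,d\}$ in at most two cells compares the third, outside entry against values all on one side of $\{i-1,i,i+1\}$.'' That correctly shows the two outside-versus-inside comparisons are preserved, but it says nothing about the inside-versus-inside comparison, and that is exactly what fails. A $3$-cycle of the values $\{i-1,i,i+1\}$ among three fixed cells preserves only one of the three pairwise orders; the other two flip. So if two of $\{b,c,d\}$ sit in a triple whose third cell $e$ lies outside (with entry either below $i-1$ or above $i+1$), the orientation of that triple is governed entirely by the relative order of the two inside entries, and it can reverse under $\braid_i$. Thus ``each triple keeps its orientation'' is not a statement you can prove by your argument, and it is not how the paper proceeds.

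The paper avoids this by not tracking triples individually. It invokes the HHL identity
\[
\coinv(T)=\sum_c \mathrm{arm}(c)+\sum_{\mathrm{left}(c)<c}\mathrm{arm}(c)+\#\{(i<j)\mid a_i\le a_j\}-\#\{(c<d)\mid d\text{ attacks }c\},
\]
observes that the first and third sums depend only on the shape, the second only on the descent set (already shown invariant), and then checks that the \emph{number} of inversion pairs---though explicitly not the set---is preserved by $\braid_i$. That cancellation is the missing idea; your triple-by-triple approach cannot see it because gains and losses among different triples must offset. If you want to salvage your route, you would need to pair up the triples whose orientation flips and show they cancel, which in effect reproduces the inversion-pair bookkeeping.
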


\begin{proof}
  Let $T$ be a standard filling of a key diagram. First we claim that for any cell $c$ of the key diagram for $a$, the entry of $c$ is greater than that of the cell to its left in $T$ if and only if the same holds for $\psi_i(T)$. If $\psi_i$ acts by swapping $i$ and $i\pm 1$, then the swapped entries are not row adjacent. For any $k \neq i, i\pm1$, we have $k>i$ if and only if $k> i \pm 1$, so the set of cells whose entry exceeds that of the cell to its left is preserved. If $\psi_i$ acts by $\braid_i$, for any $k \neq i-1,i,i+1$, we have $k>i+1$ or $k < i-1$, so the set of cells whose entry exceeds that of the cell to its left is also preserved unless, perhaps, two of $i-1,i,i+1$ are row adjacent. In this case, setting $i=2$ for notational convenience, we must have one of the eight situations depicted in Figure~\ref{fig:weak-triple} since the other four possible arrangements have $c=2$. Therefore $\braid_i$ acts as depicted in Figure~\ref{fig:weak-triple}, resolving this case.
  
  \begin{figure}[ht]
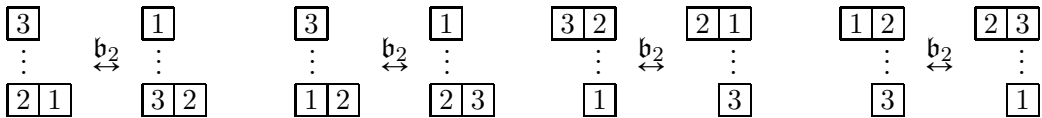

    \begin{displaymath}
      \begin{array}{l}
        \tableau{ 3 } \\[-0.5\cellsize] \hspace{0.4\cellsize} \vdots \\ \tableau{ 2 & 1 } 
      \end{array}
      \stackrel{\displaystyle\braid_2}{\leftrightarrow}
      \begin{array}{l}
        \tableau{ 1 } \\[-0.5\cellsize] \hspace{0.4\cellsize} \vdots \\ \tableau{ 3 & 2 } 
      \end{array}      
      \hspace{2\cellsize}
      \begin{array}{l}
        \tableau{ 3 } \\[-0.5\cellsize] \hspace{0.4\cellsize} \vdots \\ \tableau{ 1 & 2 } 
      \end{array}
      \stackrel{\displaystyle\braid_2}{\leftrightarrow}
      \begin{array}{l}
        \tableau{ 1 } \\[-0.5\cellsize] \hspace{0.4\cellsize} \vdots \\ \tableau{ 2 & 3 } 
      \end{array}      
      \hspace{\cellsize}
      \begin{array}{r}
        \tableau{ 3 & 2 } \\[-0.5\cellsize] \vdots \hspace{0.4\cellsize} \\ \tableau{ 1 } 
      \end{array}
      \stackrel{\displaystyle\braid_2}{\leftrightarrow}
      \begin{array}{r}
        \tableau{ 2 & 1 } \\[-0.5\cellsize] \vdots \hspace{0.4\cellsize} \\ \tableau{ 3 } 
      \end{array}
      \hspace{2\cellsize}
      \begin{array}{r}
        \tableau{ 1 & 2 } \\[-0.5\cellsize] \vdots \hspace{0.4\cellsize} \\ \tableau{ 3 } 
      \end{array}
      \stackrel{\displaystyle\braid_2}{\leftrightarrow}
      \begin{array}{r}
        \tableau{ 2 & 3 } \\[-0.5\cellsize] \vdots \hspace{0.4\cellsize} \\ \tableau{ 1 } 
      \end{array}
    \end{displaymath}
    \caption{\label{fig:weak-triple}Possible instances of $\braid_i$ on standard key tabloids when two of $1,2,3$ are adjacent.}
  \end{figure}

  Second, we claim that $\coinv(\psi_i(T)) = \coinv(T)$. To prove this, we appeal to an alternative characterization of the co-inversion number given in \cite{HHL08} as
  \begin{equation}
    \coinv(T) = \sum_{c} \mathrm{arm}(c) + \sum_{\mathrm{left}(c)<c} \mathrm{arm}(c) + \#\{(i<j) \mid a_i \leq a_j\} - \#\{(c<d) \mid d \mbox{ attacks } c\},
    \label{e:inv}
  \end{equation}
  where $\mathrm{left}(c)$ denotes the cell to the left of $c$ or the row index if $c$ is in the first column, $\mathrm{arm}(c)$ is the number of cells below $c$ in a weakly shorter row or above $c$ and one column to the left in a strictly shorter row, and $d$ attacks $c$ whenever the cells are attacking and $d$ occurs later in the column reading order. From this alternative characterization, it is enough to show that the number of these latter \emph{inversion pairs} is preserved since the other terms depend only on the shape or on the set of cells $c$ whose entry is greater than that of the entry to its left, which we have shown already is preserved. If $\psi_i$ acts by swapping $i$ with $i\pm 1$, then these two entries cannot be attacking, so by the same reasoning as before, the set of inversion pairs is preserved. Inspecting Figure~\ref{fig:weak-triple}, where now the corner cell could move up for the four left cases or down for the four right cases, shows that the number of inversion pairs, though not the set, is preserved by $\braid_i$.
\end{proof}

In particular, since $\coinv(T) = \coinv(\psi_i(T))$, Lemma~\ref{lem:stats} shows that the involutions $\psi_i$ are well-defined on standard key tabloids. Therefore we may now prove that they give a weak dual equivalence whenever all standard key tabloids are non-virtual. 

\begin{theorem}
  For a weak composition $a$ such that $\SKD(a)$ has no virtual elements, the maps $\{\psi_i\}$ on $\SKD(a)$ give a stable weak dual equivalence for $(\SKD(a),\des)$.
  \label{thm:weak-de}
\end{theorem}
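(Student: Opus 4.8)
The plan is to verify the two axioms of Definition~\ref{def:deg-weak} directly for the family $\{\psi_i\}$ acting on $\SKD(a)$, together with stability, reducing each to the corresponding statement already established for standard key tableaux in \cite{Ass-1}. The key observation is that the standardization map $\std$ of Theorem~\ref{thm:hall-slide}, or rather its refinement to standard objects, intertwines the combinatorics of tabloids with that of tableaux at the level that matters here: $\psi_i$ is defined by the same local rule (Definition~\ref{def:deg-key}) on all bijective fillings, and Lemma~\ref{lem:stats} tells us it preserves $\maj$, $\coinv$, and the descent set, hence is well-defined on $\SKD(a)$ and preserves $\des$. So first I would record that $\psi_i$ is a well-defined involution on $\SKD(a)$ preserving $\des(T)$, which is immediate from Lemma~\ref{lem:stats} and the fact that $\psi_i$ only permutes the values $i-1,i,i+1$.

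Next I would establish the far-commutation axiom (ii): for $|i-j|\geq 3$, the cells involved in $\psi_i$ (entries $i-1,i,i+1$) and those involved in $\psi_j$ (entries $j-1,j,j+1$) are disjoint, and each of $\braid$ and $\swap$ only rearranges its own three (or two) values, so the two operations act on disjoint sets of cells and commute. This argument is identical to the one in \cite{Ass-1} and does not use the key-tableau condition, so it transfers verbatim. Stability should also be quick: the bijection $T\mapsto 0^m\times T$ prepending empty rows commutes with $\psi_i$ (the added empty rows contain no entries and change no triples or descents), so $\SKD(a)$ and $\SKD(0^m\times a)$ are isomorphic as sets-with-involutions, and the $\fund$-stability of the relevant key polynomials follows from the stability of fundamental slide polynomials cited in the introduction; alternatively one invokes that $\mac_a(X;q,0)$ stabilizes, which was the motivation for the non-virtual hypothesis being harmless.

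The main work — and the expected obstacle — is axiom (i): for $i-h\leq 3$, the classes $[T]_{(h,i)}$ generated by $\psi_h,\ldots,\psi_i$, after restricting the descent composition via $\des_{(h-1,i+1)}$, must assemble into a single $\key_a$ for a weak composition $a$ of $i-h+3$. Since this is a statement about equivalence classes of degree at most $6$, I would argue it is a finite check that has essentially already been done in \cite{Ass-1} for standard key \emph{tableaux}: the point is that the restriction operator $\des_{(h-1,i+1)}$ only remembers the relative configuration of the entries $h-1,\ldots,i+1$, and within such a small window a standard key tabloid either already satisfies the key-tableau condition on those entries or is $\psi$-equivalent to one that does — the extra tabloids beyond tableaux (those with a genuine co-inversion-free but non-key configuration) get absorbed into the same class as a bona fide standard key tableau, contributing exactly the remaining fundamental slide terms of the relevant $\key_a$. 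Concretely, I would use Lemma~\ref{lem:stats} to push the class through standardization-type bookkeeping and then quote the degree $\leq 6$ classification from \cite{Ass-1}; the subtle part is checking that no tabloid is virtual \emph{within} a window (so that $\des_{(h-1,i+1)}$ genuinely produces a composition of $i-h+3$), which is exactly where the hypothesis that $\SKD(a)$ has no virtual elements is used, and I would verify this propagates to the windowed restrictions. If a clean reduction to \cite{Ass-1} is not available, the fallback is to enumerate the finitely many local patterns on $\leq 6$ consecutive values directly, matching each resulting class to the fundamental slide expansion of a key polynomial via Proposition~\ref{prop:key}.
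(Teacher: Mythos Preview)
Your treatment of axiom (ii) is correct and matches the paper exactly: the supports $\{i-1,i,i+1\}$ and $\{j-1,j,j+1\}$ are disjoint when $|i-j|\ge 3$, so $\psi_i$ and $\psi_j$ commute. The fallback you mention at the end --- enumerating the finitely many local configurations on $\le 6$ consecutive values and matching each class to a key polynomial --- is also exactly what the paper does: it carries out the case $i-h=0$ by hand (splitting on whether $\psi_i$ acts trivially, by $\braid_i$, or by $\swap_{i\pm 1}$, and computing $\des_{(i-1,i+1)}$ in each case) and then defers $i-h=1,2,3$ to an analogous analysis or computer check.

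The genuine gap is in your \emph{primary} strategy for axiom (i): reducing to the classification already done in \cite{Ass-1} for standard key \emph{tableaux}. This reduction cannot work, because by Lemma~\ref{lem:stats} the $\maj$ statistic is constant on $\psi$-classes, and by Proposition~\ref{prop:SKT-D} a standard key tabloid is a standard key tableau exactly when $\maj=0$. Hence every $\psi$-class with $\maj>0$ consists entirely of tabloids that are \emph{not} tableaux, even after restricting to a window of consecutive values (the row ascents that make $\maj$ positive persist under restriction). The second class in Figure~\ref{fig:weak-de} is an explicit example: none of its four members is a standard key tableau, locally or globally. So the assertion that ``the extra tabloids beyond tableaux \ldots\ get absorbed into the same class as a bona fide standard key tableau'' is false, and there is nothing in \cite{Ass-1} to quote for these classes. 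The paper instead does the local key-polynomial verification directly on tabloids.

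One smaller point: you write that Lemma~\ref{lem:stats} shows $\psi_i$ ``preserves $\des$.'' It does not, and it must not --- if $\psi_i$ fixed $\des$, each equivalence class would contribute a multiple of a single $\fund$, never a genuine $\key$. Lemma~\ref{lem:stats} only gives $\maj$ and $\coinv$ invariance; the latter is what makes $\psi_i$ well-defined on $\SKD(a)$. The controlled way in which $\psi_i$ \emph{changes} $\des$ is precisely the content of the $i-h=0$ case analysis in the paper's proof.
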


\begin{proof}
  The action of $\psi_i$ on $T\in\SKT(a)$ is completely determined by the positions $i+1,i,i-1$, and the relative positions of cells other than these remains unchanged under $\psi_i$. Therefore, if $|i-j|\geq 3$, then $\{i-1,i,i+1\}$ and $\{j-1,j,j+1\}$ are disjoint, the maps $\psi_i$ and $\psi_j$ commute. This establishes condition (ii) of Definition~\ref{def:deg-weak}.

  To prove condition (i) of Definition~\ref{def:deg-weak}, we must consider restricted equivalence classes under $d_h,\ldots,d_i$ for $i-h \leq 3$. Consider the case $i-h=0$. With notation as in Definition~\ref{def:deg-key}, if the entry of $c$ is $i$, then both or neither of $i-1,i$ is a descent, so the restricted weak descent composition flattens to $(1,1,1)$ or $(3)$, respectively. In either case, the corresponding key polynomial is a single fundamental slide polynomial, and so the generating polynomial of the equivalence class, which is a single fundamental slide polynomial, is also a single key polynomial. 

  If $\psi_i(T)=\braid_i(T)$, then we may assume $c$ has entry $i+1$ since this is true either for $T$ or for $\psi_i(T)$. The partitioning from Definition~\ref{def:weak-des} for $T$ has $i\!+\!1 | i i\!-\!1$, and that for $\braid_i(T)$ has $i\!+\!1 i | i\!-\!1$. Therefore $\des_{(i-1,i+1)}(T) = (0^m,2,0^n,1)$, where $n=0$ if and only if either $i+1$ sorts to the row immediately above $i$ and $i-1$ or if $t_j$ of the block containing $i+1$ is forced to be $t_{j+1}-1$. Either way, we have $\des_{(i-1,i+1)}(\braid_i(T)) = (0^{m-1},1,2)$, and so the equivalence class has generating polynomial $\fund_{(0^{m},2,0^{n},1)} + \fund_{(0^{m-1},1,2)} = \key_{(0^{m},2,0^{n},1)}$.

  If $\psi_i(T)=\swap_{i-1}(T)$, then $c=i+1$, and we may assume $T$ has $i-1$ left of $i$ since this is true either for $T$ or for $\psi_i(T)$. Here, the partitioning for $T$ is $i\!+\!1 i | i\!-\!1$, and that of $\swap_{i-1}(T)$ is $i\!+\!1 | i i\!-\!1$. Moreover, in this case, for both $T$ and $\swap_{i-1}(T)$, both blocks of the run decomposition must be forced to have $t_j = t_{j+1}-1$ since there is an entry larger than $i$ left of $i$ and below $i-1$. Therefore $\des_{(i-1,i+1)}(T) = (0^m,1,2)$ and $\des_{(i-1,i+1)}(\swap_{i-1}(T)) = (0^{m},2,1)$, and so the equivalence class corresponds to the polynomial $\fund_{(0^{m},1,2)} + \fund_{(0^{m},2,1)} = \key_{(0^{m},1,2)}$. The argument for $\psi_i(T)=\swap_{i}(T)$ is completely analogous.
  
  One can either carry out similar analyses for the cases $i-h=1,2,3$, or, to avoid tedium, since the action of $\psi_i$ is determined by relative positions of these cells based on their rows and columns, there is a reasonably finite number configurations to check by computer. 
\end{proof}

As detailed in \cite{Ass-1}, a stable weak dual equivalence for $(\mathcal{A},\des)$ implicitly gives a $\des$-preserving bijection between an equivalence class of $\mathcal{A}$ and $\SKT(a)$ for some $a$. Using this, we say that an object of $\mathcal{A}$ is \emph{Yamanouchi} if is maps under this implicit bijection to the standard key tableau whose row reading word is the reverse of the identity. This is the unique $T \in \SKT(a)$ for which $\des(T) = a$. That is, Yamanouchi elements provide canonical representatives for equivalence classes from which one can readily determine the generating polynomial.

Given a weak composition $a$, we have a natural bijection $\SKD(a) \stackrel{\sim}{\rightarrow} \SKD(0^m\times a)$ for any positive integer $m$. If $T\in\SKD(a)$ is non-virtual, then the weak descent composition of the image of $T$ is $0^m \times\des(T)$. 
We say that a standard key tabloid $T\in\SKD(a)$ is \emph{Yamanouchi} if the corresponding non-virtual element $\SKD(0^m\times a)$ is Yamanouchi for some $m>0$. With this definition in place, we now arrive at our main result.

\begin{theorem}
  The specialized nonsymmetric Macdonald polynomial $\mac_{a}(X;q,0)$ is given by
  \begin{equation}
    \mac_{a}(X;q,0) = \sum_{T \in \YKD(a)} q^{\maj(T)} \key_{\des(T)} .
  \end{equation}
  In particular, $\mac_{a}(X;q,0)$ is a positive graded sum of Demazure characters.
  \label{thm:HL-key}
\end{theorem}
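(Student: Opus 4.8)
The plan is to combine Theorem~\ref{thm:hall-slide}, which expresses $\mac_a(X;q,0)$ as a $\maj$-graded sum of fundamental slide polynomials over $\SKD(a)$, with Theorem~\ref{thm:weak-de}, which tells us that the involutions $\{\psi_i\}$ give a stable weak dual equivalence on $\SKD(a)$ whenever there are no virtual elements. The point is that by Lemma~\ref{lem:stats} the involutions $\psi_i$ preserve $\maj$, so they act within each graded piece; hence applying Theorem~\ref{thm:positivity-key} to each fixed-$\maj$ subset of $\SKD(a)$ collapses the fundamental slide expansion, equivalence class by equivalence class, into key polynomials, with the grading by $q^{\maj}$ carried along intact. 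The representative of each class is the Yamanouchi element, whose weak descent composition $\des(T)$ is exactly the index $a$ of the key polynomial $\key_{\des(T)}$ that the class sums to. So the formula should read off immediately once the positivity machinery is invoked on each graded layer.

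Concretely, the steps I would carry out are: (1) fix a nonnegative integer $r$ and let $\SKD(a)_r = \{T \in \SKD(a) : \maj(T) = r\}$; by Lemma~\ref{lem:stats} each $\psi_i$ restricts to an involution on $\SKD(a)_r$, and conditions (i) and (ii) of Definition~\ref{def:deg-weak} hold for this restriction since they are local statements about equivalence classes that already hold on all of $\SKD(a)$. (2) Handle the virtual-element obstruction by passing to $\SKD(0^m\times a)$ for $m$ large: there is a natural bijection $\SKD(a) \xrightarrow{\sim} \SKD(0^m\times a)$ preserving $\maj$, and for $m$ sufficiently large every element of the image is non-virtual (this is the stability discussion preceding the theorem, and mirrors the $\fund$-stability of key polynomials). (3) Apply Theorem~\ref{thm:positivity-key} to each $\SKD(0^m\times a)_r$ to get that $\sum_{T \in \SKD(0^m\times a)_r} \fund_{\des(T)} = \sum_{\text{classes}} \key_{a^{(\text{class})}}$, where by definition the indexing composition $a^{(\text{class})}$ is $\des$ of the Yamanouchi representative of that class. (4) Translate back: the Yamanouchi elements of $\SKD(0^m\times a)$ are exactly the images of the Yamanouchi elements of $\SKD(a)$, i.e.\ $\YKD(a)$, their $\des$ values differ only by a prefix of zeros (which does not change the key polynomial), and $\maj$ is unchanged; summing over $r$ recovers $\mac_a(X;q,0) = \sum_{T\in\YKD(a)} q^{\maj(T)}\key_{\des(T)}$. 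Since each $\key_{\des(T)}$ has nonnegative (indeed, monomial-positive) coefficients, this exhibits $\mac_a(X;q,0)$ as a positive graded sum of Demazure characters.

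The main obstacle is really step (2): making rigorous the claim that stabilization removes virtual elements without altering the combinatorics of the non-virtual ones. One must check that the natural inclusion $\SKD(a)\hookrightarrow\SKD(0^m\times a)$ commutes with the $\psi_i$ (so equivalence classes correspond), that it preserves $\maj$, and that for $m$ large enough no standard key tabloid of shape $0^m\times a$ is virtual — which amounts to showing the weak descent composition computation in Definition~\ref{def:weak-des} never bottoms out (some $t_i \le 0$) once there is enough vertical room. This is exactly the mechanism already flagged in the sentence following Theorem~\ref{thm:positivity-key} ("in practice it may be ignored whenever the family of polynomials behaves well under stabilization") and in the paragraph defining Yamanouchi tabloids; I would cite the corresponding stability statement from \cite{Ass-1} for fundamental slide polynomials and the $\fund$-stability of $\key_a$, and verify that $\SKD(0^m\times a)$ has no virtual elements for $m$ at least the length of $a$. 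Everything else — the grading being respected, the Yamanouchi representative having $\des$ equal to the key index, the final assembly — is bookkeeping on top of the two cited theorems.
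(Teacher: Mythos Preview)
Your proposal is correct and follows essentially the same route as the paper: combine the fundamental slide expansion (Theorem~\ref{thm:hall-slide}) with the weak dual equivalence on $\SKD(a)$ (Theorem~\ref{thm:weak-de}) and the $\maj$-constancy of Lemma~\ref{lem:stats}, invoke Theorem~\ref{thm:positivity-key} class by class, and handle virtual elements by padding to $0^m\times a$. The only point to tighten is your step~(4): it is not true that $\key_{0^m\times b}=\key_b$, so the descent from $0^m\times a$ back to $a$ cannot go through ``same key polynomial''; the paper instead restricts the implicit $\des$-preserving bijection $\SKD(0^m\times a)\to\SKT$ to those tabloids which are non-virtual already in $\SKD(a)$ (equivalently, whose $\des$ has at least $m$ leading zeros) and to the corresponding standard key tableaux, obtaining a $\des$-preserving bijection on non-virtual elements that forces the two fundamental slide generating polynomials---hence the key expansions---to agree.
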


\begin{proof}
  If $\SKD(a)$ has no virtual elements, then by Theorem~\ref{thm:weak-de} and Theorem~\ref{thm:positivity-key}, each equivalence class under $\{\psi_i\}$ has generating polynomial a single key polynomial. By Lemma~\ref{lem:stats}, the $\maj$ statistic is constant on each class, so it factors out. Combining this with Theorem~\ref{thm:hall-slide}, we have
  \begin{equation}
    \mac_{a}(X;q,0) = \sum_{T \in \SKD(a)} q^{\maj(T)} \fund_{\des(T)} = \sum_{S \in \YKD(a)} q^{\maj(S)} \key_{\des(S)}.
  \end{equation}
  If $\SKD(a)$ has virtual elements, let $m>0$ be any integer such that $\SKD(0^m\times a)$ does not. The implicit bijection with standard key tableaux provided by the weak dual equivalence commutes with this padding. Let $\Psi$ be the induced $\des$-preserving map from $\SKD(0^{m}\times a)$ to $\SKT$. For any non-virtual elements of $\SKD(a)$, the corresponding standard key tabloids for $\SKD(0^{m}\times a)$ are precisely those that map to some standard key tableau with weak descent composition $0^{m}\times a$. Given any weak dual equivalence class, we may pull back both the standard key tabloids in $\SKD(0^{m}\times a)$ that are non-virtual in $\SKD(a)$ and those standard key tableaux that have at least $m$ leading $0$'s. This gives a $\des$-preserving bijection between non-virtual elements, so they must have the same generating polynomial. Hence $\mac_a(X;q,0)$ is also key positive with the same leading terms.
\end{proof}

%
\section{Symmetric polynomials}
%
\label{sec:mac}

In this section we relate the combinatorics for nonsymmetric Macdonald polynomials with that for the symmetric case, so that we now let $X$ denote the infinite set of variables $x_1,x_2,\ldots$.

Gessel \cite{Ges84} introduced \emph{quasisymmetric functions}, functions that are invariant under any sliding of the variables. Quasisymmetric functions are naturally index by \emph{compositions}, i.e. sequences of positive integers. Gessel's \emph{fundamental quasisymmetric function} is given by
\begin{equation}
  F_{\alpha}(X) = \sum_{\mathrm{flat}(b) \ \mathrm{refines} \ \alpha} X^{b}.
  \label{e:fund}
\end{equation}

Assaf and Searles \cite{AS17} showed that fundamental slide polynomials stabilize to fundamental quasisymmetric functions, that is
\begin{equation}
  \lim_{m\rightarrow\infty} \fund_{(0^{m}\times a)} (X) = F_{\mathrm{flat}(a)} (X).
  \label{e:slide-lift}
\end{equation}

A \emph{partition} is a weakly decreasing sequence of nonnegative integers. The \emph{Young diagram} of a partition $\lambda$ has $\lambda_i$ cells left-justified in row $i$. For example, see Figure~\ref{fig:SYT}.

A \emph{Young tableau} is a filling of a partition diagram such that entries weakly increase along rows and strictly increase up columns. To emphasize the range of numbers used, we let $\SSYT_n(\lambda)$ denote the set of \emph{semi-standard Young tableaux} of shape $\lambda$ with entries at most $n$. A \emph{standard Young tableau} is a bijective filling, and so it necessarily also has strictly increasing rows. Denote the set of standard Young tableaux of shape $\lambda$ by $\SYT(\lambda)$. For examples, see Figure~\ref{fig:SYT}.

\begin{figure}[ht]
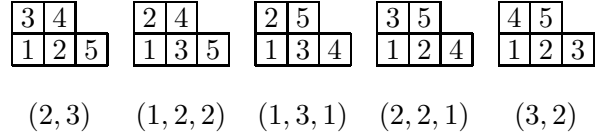

  \begin{displaymath}
    \begin{array}{ccccc}
      \tableau{3 & 4 \\ 1 & 2 & 5} & 
      \tableau{2 & 4 \\ 1 & 3 & 5} & 
      \tableau{2 & 5 \\ 1 & 3 & 4} & 
      \tableau{3 & 5 \\ 1 & 2 & 4} & 
      \tableau{4 & 5 \\ 1 & 2 & 3} \\ \\
      (2,3) & (1,2,2) & (1,3,1) & (2,2,1) & (3,2)
    \end{array}
  \end{displaymath}
  \caption{\label{fig:SYT}Standard Young tableaux of shape $(3,2)$ and their descent compositions.}
\end{figure}

The \emph{Schur polynomials} arise as characters for the irreducible representations of the general linear group, and also enjoy rich connections with geometry. The \emph{Schur function} $s_{\lambda}(X)$ is given by
\begin{equation}
  s_{\lambda}(X) = \sum_{T \in \SSYT(\lambda)} X^{\wt(T)}.
  \label{e:schur}
\end{equation}

Given a standard filling $T$ of a Young diagram, we may associate to it a composition called the \emph{descent composition of $T$}, denoted by $\Des(T)$, given by maximal length runs between descents, where $i$ is a \emph{descent} if $i+1$ lies weakly to its left. For examples, see Figure~\ref{fig:SYT}. 

\begin{proposition}[\cite{Ges84}]
  The Schur function $s_{\lambda}(X)$ is given by
  \begin{equation}
    s_{\lambda}(X) = \sum_{T \in \SYT(\lambda)} F_{\Des(T)}(X).
  \end{equation}
  \label{prop:gessel}
\end{proposition}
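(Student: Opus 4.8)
The plan is to mimic the proof of Proposition~\ref{prop:key-std}, replacing key tableaux by Young tableaux and fundamental slide polynomials by fundamental quasisymmetric functions. Fix $\lambda$ and work with the infinite variable set. The main tool is a standardization map $\std \colon \SSYT(\lambda) \to \SYT(\lambda)$: given $T \in \SSYT(\lambda)$ with $n$ cells, relabel its cells by $1, \dots, n$ by processing the values $k = 1, 2, \dots$ in increasing order and, for each $k$, labeling the cells of $T$ containing $k$ from left to right (equivalently, in order of increasing column, which is well-defined since columns of a Young tableau are strictly increasing and hence contain each value at most once). One then checks the routine point that $\std(T) \in \SYT(\lambda)$: weakly increasing rows become strictly increasing because within a row equal entries get consecutive increasing labels read left to right, and strictly increasing columns remain strictly increasing because an entry strictly below another is either already larger or has the same value but a label assigned at a later stage.

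Next I would analyze the fibers of $\std$. The key observation is that when $S = \std(T)$, two consecutive labels $i, i+1$ receive the same original value in $T$ if and only if $i+1$ sits strictly to the right of $i$ in $S$ (this is exactly the situation created by the left-to-right relabeling within a single value), which by definition of $\Des$ is precisely the statement that $i$ is \emph{not} a descent of $S$. Consequently $\wt(T)$ refines $\Des(S)$, and conversely, for every weak composition $b$ with $\mathrm{flat}(b)$ refining $\Des(S)$ there is a unique $T \in \std^{-1}(S)$ with $\wt(T) = b$: one recovers $T$ from $S$ and $b$ by, within each maximal run of $S$ that is free to be split according to $b$, replacing the first so-many labels by the smallest allowed value, the next so-many by the next value, and so on, exactly as in the construction at the end of the proof of Proposition~\ref{prop:key-std}. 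Existence follows from this recipe and uniqueness from the total lack of choice.

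Combining these two facts, for each $S \in \SYT(\lambda)$ we get
\[
  \sum_{T \in \std^{-1}(S)} X^{\wt(T)} \;=\; \sum_{\mathrm{flat}(b)\ \mathrm{refines}\ \Des(S)} X^{b} \;=\; F_{\Des(S)}(X),
\]
using the definition \eqref{e:fund}. Summing over $S \in \SYT(\lambda)$ and using that $\std$ partitions $\SSYT(\lambda)$ into these fibers, the left side becomes $\sum_{T \in \SSYT(\lambda)} X^{\wt(T)} = s_\lambda(X)$ by \eqref{e:schur}, while the right side becomes $\sum_{S \in \SYT(\lambda)} F_{\Des(S)}(X)$, which is the claim.

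I expect the only genuinely delicate step to be the precise correspondence between the descent set of $\std(T)$ and the splittings of the multiset $\wt(T)$ — that is, verifying that "$i+1$ strictly right of $i$ in $\std(T)$" is equivalent to "$i$ and $i+1$ may come from the same value," and hence that the fibers are indexed by exactly the compositions refining $\Des(S)$ and no others. Everything else (well-definedness of $\std$, that it lands in $\SYT(\lambda)$, and the inverse construction) is bookkeeping entirely analogous to the key-tableau case already treated in the excerpt.
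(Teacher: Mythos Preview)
Your argument is correct and is precisely the classical standardization proof of Gessel's formula. The paper itself does not supply a proof of this proposition; it is quoted from \cite{Ges84} and stated without argument. What you have written is the natural translation of the paper's proof of Proposition~\ref{prop:key-std} to the Young-tableau setting, and indeed the paper implicitly relies on the reader making this connection. The one point worth tightening is the verification that the destandardized filling has strictly increasing columns: you should note that if $i<j$ occupy the same column of $S$ then some $k\in\{i,\ldots,j-1\}$ must be a descent (otherwise $j$ would lie strictly right of $i$), so $i$ and $j$ land in different blocks of any refinement of $\Des(S)$ and hence receive distinct values. With that remark in place the proof is complete.
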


For example, from Figure~\ref{fig:SYT} we compute
\[ s_{(3,2)} = F_{(2,3)} + F_{(1,2,2)} + F_{(1,3,1)} + F_{(2,2,1)} + F_{(3,2)} . \]

\begin{proposition}[\cite{Ass-1}]
  Given a weak composition $a$ whose nonzero parts rearrange $\lambda$, the map $\phi$ that drops boxes to Young diagram shape, sorts columns into decreasing order (bottom to top) and replaces $i$ with $n-i+1$ gives a bijection between $\SKT(a)$ and $\SYT(\lambda)$. Moreover, for $T \in\SKT(a)$, we have $\Des(\phi(T)) = \mathrm{reverse}(\mathrm{flat}(\des(T)))$.
  \label{prop:stable}
\end{proposition}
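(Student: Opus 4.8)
The plan is to exhibit the map $\phi$ explicitly as a composition of three combinatorial operations and check that each preserves the relevant structure, then verify that the induced statement about descent compositions holds. First I would observe that since $a$ has nonzero parts rearranging $\lambda$, dropping the cells of a standard key tableau $T$ down their columns (filling from the bottom of each column, respecting the column order) produces a filling of the Young diagram $\lambda$; the point is that in a standard key tableau columns already have distinct entries, so no collision occurs, and the resulting column-supports are exactly the columns of $\lambda$. Then sorting each column into decreasing order top-to-bottom and applying the order-reversing relabeling $i \mapsto n-i+1$ turns decreasing columns into increasing columns and (I would check) forces rows to weakly increase, so the output lies in $\SYT(\lambda)$. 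The inverse map runs the same steps in reverse: given a standard Young tableau, relabel by $i\mapsto n-i+1$, and then lift cells back up into the key diagram shape $a$, using the key tableau condition to determine uniquely which cell of each column-support goes in which row. The main content is that this lifting is well-defined and single-valued — this is where the defining inequality in Definition~\ref{def:key-tableau} is needed, and I expect it to be the principal obstacle, since one must argue that the key tableau condition both (a) is automatically satisfied by the reconstructed filling and (b) pins down the row-placement uniquely.

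Next I would prove the descent statement $\Des(\phi(T)) = \mathrm{reverse}(\mathrm{flat}(\des(T)))$. The cleanest route is to track, cell by cell, the relationship between descents of $\phi(T)$ and the block structure used to define $\des(T)$ in Definition~\ref{def:weak-des}. Recall $\des(T)$ is built from the decreasing word $n\cdots 21$ broken between $i+1$ and $i$ exactly when $i+1$ lies weakly right of $i$ in $T$. Under the relabeling $i\mapsto n-i+1$, the value $i$ in $T$ becomes $n-i+1$ in $\phi(T)$, so "$i+1$ weakly right of $i$ in $T$" becomes a statement about the pair $(n-i, n-i+1)$ in $\phi(T)$, and I would check that this is precisely the condition "$n-i+1$ is a descent in $\phi(T)$" — that is, $j+1$ weakly left of $j$ — after accounting for the column-drop and column-sort, neither of which changes which column an entry sits in, hence neither changes the left/right relation between entries in distinct columns, while entries in the same column become non-comparable for the descent test in the appropriate way. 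Summing block sizes in the two conventions and reversing the order of the blocks (since relabeling reverses the linear order on values) then yields the claim. For virtual $T$ this situation does not arise, since by Proposition~\ref{prop:SKT-D} and the surrounding discussion $\SKT(a)$ consists of genuine standard key tableaux with $\des(T)\neq\varnothing$.

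Finally I would assemble the bijection: injectivity and surjectivity follow from having an explicit two-sided inverse, and the shape count is consistent because $|\SKT(a)|$ and $|\SYT(\lambda)|$ both equal the number of standard objects with the prescribed column multiset once the key tableau condition is shown to make the lift unique. The hard step, as noted, is the well-definedness and uniqueness of the lifting direction; everything else is bookkeeping about rows, columns, and the order-reversing relabeling. I would present the forward map with a worked example (e.g. matching Figure~\ref{fig:SKT} against Figure~\ref{fig:SYT}) to make the column-drop and relabeling transparent, and relegate the verification of the key tableau inequalities under the inverse to a short lemma-style paragraph.
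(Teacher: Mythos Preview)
The paper does not prove this proposition at all: it is quoted from \cite{Ass-1} and immediately used to derive \eqref{e:key-stable}, with no argument given. So there is no proof in the paper to compare against; your outline is an attempt at the proof that the cited reference presumably contains.

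Your overall strategy is the natural one and tracks the statement closely. Two points deserve correction. First, your remark that ``$\SKT(a)$ consists of genuine standard key tableaux with $\des(T)\neq\varnothing$'' is false: Figure~\ref{fig:SKT} explicitly exhibits a standard key tableau of shape $(0,2,1,2)$ with $\des(T)=\varnothing$, and Proposition~\ref{prop:SKT-D} says nothing about virtuality. The descent identity in the proposition therefore either needs a convention for $\mathrm{flat}(\varnothing)$ (namely, the composition of block sizes $|\tau^{(1)}|,\ldots,|\tau^{(k)}|$ regardless of whether the $t_i$ are positive) or is implicitly restricted to non-virtual $T$; you should say which, rather than asserting the case does not occur. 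Second, your column-sorting direction is reversed relative to the statement: the proposition sorts columns decreasing \emph{bottom to top}, i.e.\ largest entry at the bottom, so that after $i\mapsto n-i+1$ columns increase upward as required for $\SYT$; your ``decreasing top-to-bottom'' would give the wrong orientation.

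The substantive step you flag --- that the inverse lift from $\SYT(\lambda)$ back to $\SKT(a)$ is well-defined and unique --- is indeed the only real work, and your plan to isolate it as a lemma using the inequality in Definition~\ref{def:key-tableau} is appropriate. The descent calculation you sketch is correct once you observe that column position is preserved by both dropping and sorting, so the weakly-left/weakly-right relation between any two entries is unchanged, and the relabeling $i\mapsto n-i+1$ converts the break condition ``$i+1$ weakly right of $i$'' into the $\SYT$ descent condition at $n-i$.
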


From Proposition~\ref{prop:stable}, we have the following result that is implicit in work of Lascoux and Sch{\"u}tzenberger \cite{LS90}. For $a$ a weak composition of length $n$, let $\mathrm{sort}(a)$ be the partition with parts given by the nonzero entries of $a$. Then we have
\begin{equation}
  \lim_{m \rightarrow \infty} \key_{0^m \times a} (X) = s_{\mathrm{sort}(a)} (X).
  \label{e:key-stable}
\end{equation}

Haglund \cite{Hag04} discovered an elegant combinatorial formula for the monomial expansion of the transformed Macdonald functions that he proved together with Haiman and Loehr \cite{HHL05}. The combinatorial formula for transformed Macdonald functions is the sum over all fillings weighted by statistics that were the precursors of $\coinv$ and $\maj$ in the nonsymmetric case.

\begin{figure}[ht]
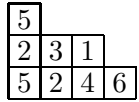

  \begin{displaymath}
    \tableau{ 5 \\ 2 & 3 & 1 \\ 5 & 2 & 4 & 6 }
  \end{displaymath}
  \caption{\label{fig:young-fill}A filling of the Young diagram for $(4,3,1)$}
\end{figure}

The \emph{leg} of a cell of a Young diagram is the number of cells weakly above it. Given a filling $T$, define $\comaj(T)$ to be the sum of the legs of all cells $c$ such that the entry in $c$ is weakly less than the entry immediately below it. For example, the filling in Figure~\ref{fig:young-fill} has $\comaj = 3$.

A \emph{triple} of a Young diagram is a collection of two or three cells with two in the same row and either these are in the bottom row or we include the cell immediately below the left cell. An \emph{inversion triple} is a triple with two cells and the larger to the left or with three cells oriented counter-clockwise. For an illustration, see Figure~\ref{fig:young-triple}. The \emph{inversion number} of a filling of a Young diagram is the number of inversion triples. For example, the filling in Figure~\ref{fig:young-fill} has $\inv = 3$.

\begin{figure}[ht]
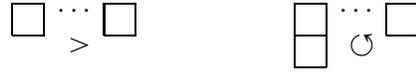

  \begin{displaymath}
    \tableau{ \ \\ } 
    \begin{array}{r}
      \cdots \\
      >
    \end{array}
    \tableau{ \ \\ & }
    \hspace{4\cellsize}
    \tableau{ \ \\ \ } 
    \begin{array}{r}
      \cdots \\
      \circlearrowleft
    \end{array}
    \tableau{ \ \\ & }
  \end{displaymath}
  \caption{\label{fig:young-triple}Inversion triples for Young diagrams.}
\end{figure}

\begin{proposition}
  The set of standard fillings of $\mu$ with $\comaj(T)=\inv(T)=0$ is $\SYT(mu)$.
  \label{prop:SSYT-D}
\end{proposition}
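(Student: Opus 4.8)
The plan is to follow the template of Proposition~\ref{prop:SSKT-D}, proving the two inclusions $\SYT(\mu)\subseteq\{T:\comaj(T)=\inv(T)=0\}$ and its converse separately. The argument should in fact be slightly simpler here, since the rows of a Young diagram are automatically weakly decreasing, so the ``weakly longer row'' hypotheses built into the definition of a triple are never an obstruction.

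For the forward inclusion, fix $T\in\SYT(\mu)$. Since the columns of $T$ strictly increase from bottom to top, no cell $c$ has entry weakly less than the entry immediately below it, and as legs of cells are positive this gives $\comaj(T)=0$. For $\inv(T)=0$: a two-cell triple lies in the bottom row of $\mu$, which strictly increases, so its larger entry is on the right and it is not an inversion triple; for a three-cell triple consisting of $u$ left of $v$ in a row together with the cell $w$ immediately below $u$, the row and column conditions force $\mathrm{entry}(w)<\mathrm{entry}(u)<\mathrm{entry}(v)$, and reading these in increasing order traces out the orientation that Figure~\ref{fig:young-triple} designates as \emph{non}-inversion (clockwise). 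Hence $\inv(T)=0$.

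For the converse, let $T$ be a standard filling of $\mu$ with $\comaj(T)=\inv(T)=0$. Because legs of cells in a Young diagram are positive, $\comaj(T)=0$ forces every cell to have entry strictly greater (the filling is bijective) than the cell immediately below it, i.e. columns strictly increase going up. It remains to show rows strictly increase, for which, entries being distinct, it suffices to exclude an adjacent ``descent'' within a row. Suppose some row contains adjacent cells $u$ (left) and $v$ (right) with $\mathrm{entry}(u)>\mathrm{entry}(v)$; among all such configurations choose one whose row $r$ has smallest index. If $r$ is the bottom row, then $\{u,v\}$ is a two-cell triple with the larger entry on the left, hence an inversion triple, a contradiction. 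Otherwise let $w,w'$ be the cells immediately below $u,v$; they are adjacent in row $r-1$, which by the choice of $r$ has no descent, so $\mathrm{entry}(w)<\mathrm{entry}(w')$, and columns increasing up gives $\mathrm{entry}(w')<\mathrm{entry}(v)$. Thus $\mathrm{entry}(w)<\mathrm{entry}(v)<\mathrm{entry}(u)$, so $\{u,v,w\}$ is a three-cell triple whose entries, read in increasing order, trace out the \emph{counterclockwise} orientation and hence form an inversion triple, again a contradiction. Therefore every row strictly increases and $T\in\SYT(\mu)$. The one point I would be most careful about is the bookkeeping that matches the Haglund--Haiman--Loehr orientation convention (via Figure~\ref{fig:young-triple}) to the increasing-order readings used above, confirming in particular that the configuration $\mathrm{entry}(w)<\mathrm{entry}(u)<\mathrm{entry}(v)$ is non-inversion while $\mathrm{entry}(w)<\mathrm{entry}(v)<\mathrm{entry}(u)$ is an inversion; once that is pinned down, everything else is immediate, and this proposition will play for transformed Macdonald polynomials the role that Proposition~\ref{prop:SSKT-D} plays in the nonsymmetric case.
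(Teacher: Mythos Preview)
Your proposal is correct and follows essentially the same approach as the paper's (very terse) proof: $\comaj=0$ gives increasing columns, two-cell triples handle the bottom row, and an induction on rows using three-cell triples handles the rest. Your version is just more explicit, and in particular your use of the auxiliary cell $w'$ to force $\mathrm{entry}(w)<\mathrm{entry}(v)$ is exactly what is needed (and what the paper's phrase ``this combined with increasing columns'' is implicitly invoking), since without it the configuration $v<w<u$ would be a clockwise, hence non-inversion, triple.
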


\begin{proof}
  The condition $\comaj=0$ forces columns to increase bottom to top. The lack of two cell inversion triples forces the bottom row to increase left to right, and this combined with increasing columns and no three cell inversion triples ensures all higher rows increase left to right as well.
\end{proof}

\begin{definition}[\cite{HHL05}]
  The Macdonald polynomial $\Mac_{\mu}(X_n;q,t)$ is given by
  \begin{equation}
    \Mac_{\mu}(X;q,t) = \sum_{T:\mu\stackrel{\sim}{\rightarrow}[n]} q^{\inv(T)} t^{\comaj(T)} F_{\Des(T)}(X),
  \end{equation}
  where the sum is over all standard fillings of $\mu$.
\end{definition}

In particular, from Proposition~\ref{prop:SSYT-D}, we have the following analog of \eqref{e:mac-key},
\begin{equation}
  \Mac_{\mu}(X;0,0) = s_{\mu}(X).
\end{equation}

There is a well-known $q,t$-symmetry for the transformed Macdonald polynomials that follows from Macdonald's original definition \cite{Mac88} when interpreted as triangularity conditions; it is
\begin{equation}
  \Mac_{\mu}(X;q,t) = \Mac_{\mu^{\prime}}(X;t,q),
\end{equation}
where $\mu^{\prime}$ is the \emph{conjugate of $\mu$} whose Young diagram is the transpose of the Young diagram for $\mu$.

Similar to the nonsymmetric case, we are interested in the surviving objects when $q=0$.

\begin{figure}[ht]
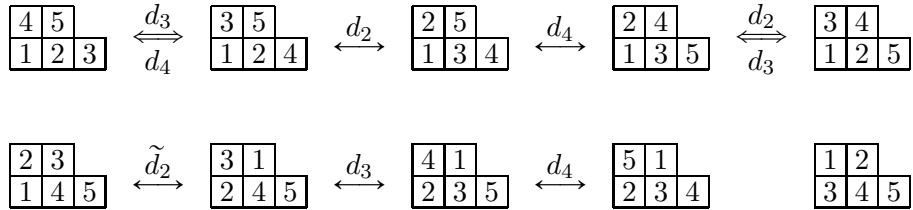

  \begin{displaymath}
    \begin{array}{ccccccccc}
      \tableau{4 & 5 \\ 1 & 2 & 3} & \raisebox{-\cellsize}{$\stackrel{\displaystyle\stackrel{\displaystyle d_3}{\Longleftrightarrow}}{d_4}$}  &
      \tableau{3 & 5 \\ 1 & 2 & 4} & \raisebox{-0.5\cellsize}{$\stackrel{\displaystyle d_{2}} \longleftrightarrow$} &
      \tableau{2 & 5 \\ 1 & 3 & 4} & \raisebox{-0.5\cellsize}{$\stackrel{\displaystyle d_{4}} \longleftrightarrow$} &
      \tableau{2 & 4 \\ 1 & 3 & 5} & \raisebox{-\cellsize}{$\stackrel{\displaystyle\stackrel{\displaystyle d_2}{\Longleftrightarrow}}{d_3}$}  &   
      \tableau{3 & 4 \\ 1 & 2 & 5} \\ \\ \\
      \tableau{2 & 3 \\ 1 & 4 & 5} & \raisebox{-0.5\cellsize}{$\stackrel{\displaystyle \widetilde{d}_{2}}\longleftrightarrow$} &
      \tableau{3 & 1 \\ 2 & 4 & 5} & \raisebox{-0.5\cellsize}{$\stackrel{\displaystyle d_{3}} \longleftrightarrow$} &
      \tableau{4 & 1 \\ 2 & 3 & 5} & \raisebox{-0.5\cellsize}{$\stackrel{\displaystyle d_{4}} \longleftrightarrow$} &
      \tableau{5 & 1 \\ 2 & 3 & 4} & &
      \tableau{1 & 2 \\ 3 & 4 & 5}
    \end{array}
  \end{displaymath}
  \caption{\label{fig:SYD}Standard Young tabloids of shape $(3,2)$}
\end{figure}

\begin{definition}
  The set of \emph{standard Young tabloids of shape $\mu$}, denoted by $\SYD(\mu)$, are bijective filling of $\mu$ with no inversion triples. The Hall--Littlewood polynomial $\Mac_{\mu}(X_n;0,t)$ is given by
  \begin{equation}
    \Mac_{\mu}(X;0,t) = \sum_{T \in \SYD(\mu)} t^{\comaj(T)} F_{\Des(T)}(X).
  \end{equation}
\end{definition}

For example, there are ten standard Young tabloids of shape $(3,2)$ as shown in Figure~\ref{fig:SYD}. From this we compute
\begin{eqnarray*}
  \Mac_{(3,2)}(X;0,t) & = & \left( F_{(2,3)} + F_{(1,2,2)} + F_{(1,3,1)} + F_{(2,2,1)} + F_{(3,2)} \right) \\
  & & t \left( F_{(1,4)} + F_{(2,3)} + F_{(3,2)} + F_{(4,1)} \right) + t^2 F_{(5)} .
\end{eqnarray*}

In fact, Hall-Littlewood polynomials are well known to be Schur positive; e.g. see \cite{Mac95}.  For example, from the previous computation we see
\[ \Mac_{(3,2)}(X;0,t) = s_{(3,2)}(X) + t s_{(4,1)}(X) + t^2 s_{(5)}(X) .\]

For comparison, using the stability of fundamental slide polynomials and key polynomials, we see from Figure~\ref{fig:weak-de} that
\[ \lim_{m \rightarrow\infty} \mac_{0^m \times (2,1,2)}(X;q,0) = s_{(2,2,1)}(X) + q s_{(2,1,1,1)}(X) + q^2 s_{(1,1,1,1,1)}(X). \]

Recall the well-known symmetric function involution $\omega$ defined by $\omega s_{\lambda} = s_{\lambda^{\prime}}$. Then we have,
\[ \lim_{m \rightarrow\infty} \mac_{0^m \times (2,1,2)}(X;q,0) = \omega \Mac_{(3,2)}(X;0,q) = \omega \Mac_{(2,2,1)}(X;q,0). \]
We give a bijective proof of this stability result in general.

\begin{theorem}
  For a weak composition $a$, we have
  \begin{equation}
    \lim_{m \rightarrow\infty} \mac_{0^m \times a}(X;q,0) = \omega \Mac_{\mathrm{sort}(a)^{\prime}}(X;0,q) = \omega \Mac_{\mathrm{sort}(a)}(X;q,0) .
  \end{equation}
  \label{thm:mac-stable}
\end{theorem}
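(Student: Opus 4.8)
The plan is to establish the two claimed equalities separately, since the first is a stability statement about nonsymmetric Macdonald polynomials and the second is the $q,t$-symmetry specialized to $q=0$. For the first equality, $\lim_{m\to\infty}\mac_{0^m\times a}(X;q,0) = \omega\Mac_{\mathrm{sort}(a)^\prime}(X;0,q)$, I would exhibit an explicit statistic-preserving bijection between the combinatorial models. By Theorem~\ref{thm:hall-slide}, $\mac_{0^m\times a}(X;q,0) = \sum_{T\in\SKD(0^m\times a)} q^{\maj(T)}\fund_{\des(T)}$, and by \eqref{e:slide-lift} the slide polynomials stabilize to fundamental quasisymmetric functions $F_{\mathrm{flat}(\des(T))}$, while virtual tabloids contribute nothing. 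On the symmetric side, the Hall--Littlewood model gives $\Mac_{\mu}(X;0,q) = \sum_{U\in\SYD(\mu)} q^{\comaj(U)}F_{\Des(U)}$, and applying $\omega$ amounts to replacing each $F_{\alpha}$ by $F_{\alpha'}$ where $\alpha'$ is the composition corresponding to the transposed ribbon (equivalently, the descent set passes to its complement). The natural map is the one from Proposition~\ref{prop:stable}: drop cells to straight shape, sort columns decreasingly, and replace each entry $i$ by $n-i+1$. I would check that this sends $\SKD(0^m\times a)$ (modulo virtuals) bijectively to $\SYD(\mathrm{sort}(a)^\prime)$, that it carries $\maj$ to $\comaj$, and that it transforms $\mathrm{flat}(\des(T))$ into the transpose-ribbon composition of $\Des$ of the image. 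The column-sorting is exactly what converts ``no co-inversion triples'' for key diagrams into ``no inversion triples'' for Young diagrams, and the order-reversal of entries is what swaps the two triple conventions and turns $\maj$ (legs = cells weakly right) into $\comaj$ (legs = cells weakly above) after transposition.

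For the second equality, $\omega\Mac_{\mathrm{sort}(a)^\prime}(X;0,q) = \omega\Mac_{\mathrm{sort}(a)}(X;q,0)$, I would simply invoke the $q,t$-symmetry $\Mac_{\mu}(X;q,t) = \Mac_{\mu^\prime}(X;t,q)$ stated in the excerpt, with $\mu = \mathrm{sort}(a)$, $q \mapsto 0$, $t\mapsto q$: this gives $\Mac_{\mathrm{sort}(a)}(X;q,0) = \Mac_{\mathrm{sort}(a)^\prime}(X;0,q)$ directly, and then apply $\omega$ to both sides. So this half is essentially free once the symmetry is cited.

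The main obstacle is the careful bookkeeping in the first half: verifying that Proposition~\ref{prop:stable}'s map, which was stated there as a bijection $\SKT(a)\to\SYT(\mathrm{sort}(a))$ intertwining $\des$ and $\Des$ up to reversal and flattening, extends to the tabloid level as a bijection $\SKD(0^m\times a)\to\SYD(\mathrm{sort}(a)^\prime)$. In particular I must track how the triple conditions transform: a Type~I co-inversion triple (third cell above-left, lower row strictly longer, oriented clockwise) and a Type~II co-inversion triple (third cell below-right, higher row weakly longer, oriented counterclockwise) must be shown to correspond, after column-sorting, entry-reversal, and the implicit transposition of shape, exactly to the two-cell and three-cell inversion triples of Young diagrams in Figure~\ref{fig:young-triple}. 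I also need to confirm the $\maj/\comaj$ match: $\maj$ sums legs (cells weakly right) over cells whose entry strictly exceeds the left neighbor, and after reversing entries this becomes cells whose entry is weakly less than the reversed left-neighbor; transposing the diagram turns ``weakly right'' into ``weakly above'' and ``left neighbor'' into ``cell below,'' which is precisely the definition of $\comaj$. The stability limit is the clean part: leading zero columns in the key diagram contribute no cells and the slide-to-quasisymmetric limit \eqref{e:slide-lift} handles the passage from $\fund_{\des(T)}$ to $F_{\mathrm{flat}(\des(T))}$, with virtual tabloids (those with $\des(T)=\varnothing$) contributing $\fund_\varnothing = 0$ and hence being harmlessly discarded, while the bijection on the symmetric side is onto $\SYD(\mathrm{sort}(a)^\prime)$ since the number of leading zeros $m$ can be taken arbitrarily large.
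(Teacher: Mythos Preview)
Your overall strategy matches the paper's exactly: construct a bijection from $\SKD(a)$ to $\SYD(\mathrm{sort}(a)')$ carrying $\maj$ to $\comaj$ and sending $\mathrm{flat}(\des)$ to the complement of $\Des$ (which accounts for the $\omega$), then cite the $q,t$-symmetry for the second equality. The gap is in the specific bijection you propose. The map $\phi$ of Proposition~\ref{prop:stable} lands in shape $\mathrm{sort}(a)$, not $\mathrm{sort}(a)'$: ``drop to Young shape and sort columns'' does not transpose. Even after appending the ``implicit transposition'' you mention, you would obtain a filling of $\mathrm{sort}(a)'$ with each row \emph{sorted}, and this is not in general the unique inversion-free arrangement --- standard Young tabloids need not have increasing rows (see the tabloid with top row $3,1$ in Figure~\ref{fig:SYD}). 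So the map as you describe it does not land in $\SYD(\mathrm{sort}(a)')$, and your triple-by-triple verification, being phrased in terms of column-sorting, does not establish what is needed.

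The paper's bijection $\theta$ is different and avoids this: a standard key tabloid is uniquely determined by the set partition of $[n]$ into its \emph{column} entries, and a standard Young tabloid by the set partition into its \emph{row} entries (cited from \cite{Hag08}); one simply transports column sets to row sets and then applies $i\mapsto n-i+1$. With this $\theta$ the paper checks that $\mathrm{flat}(\des(T))$ is the complement of $\Des(\theta(T))$ and invokes~\eqref{e:slide-lift}. The paper also records, as an additional property, that $\theta$ intertwines the weak dual equivalence $\psi_i$ with the involutions $D_{n-i+1}$ of \cite{Ass15}. Your heuristic that entry-reversal plus the column-to-row passage converts $\maj$ to $\comaj$ is correct in spirit and in fact more explicit than what the paper writes down --- but it should be argued for $\theta$, not for the map of Proposition~\ref{prop:stable}.
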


\begin{proof}
  For each set partitioning of $[n]$ into the rows (respectively, columns) of a Young (respectively, key) diagram has a unique standard Young (respectively, key) tabloid with those row (respectively, column) entries \cite{Hag08}. Composing this with the map on entries that sends $i$ to $n-i+1$ gives a bijection, say $\theta$, between $\SKD(a)$ and $\SYD(\lambda)$ for any weak composition $a$ whose nonzero parts rearrange $\lambda$. We claim that this bijection commutes the with dual equivalence structures on the corresponding sets in the following sense. Recall the involutions $D_i$ on standard fillings of a Young diagram from \cite{Ass15} that preserve Haglund's $\inv$ and $\maj$ statistics. Then for $T\in\SKT(\lambda)$, we have $\theta(\psi_i(T)) = D_{n-i+1}(\theta(T))$; for example, compare Figures~\ref{fig:weak-de} and \ref{fig:SYD}. Moreover, the descent composition for $\SYD$ is with respect to the row reading word (left to right) and the weak descent composition for $\SKD$ is with respect to the column reading word (bottom to top). Identifying compositions of $n$ with subsets of $[n-1]$, for $T\in\SKD(\lambda)$, $\mathrm{flat}(\des(T))$ is the complement of $\Des(\theta(T))$. The theorem now follows from the stability of fundamental slide polynomials \eqref{e:slide-lift}. 
\end{proof}

For a final comparison, define the \emph{Kostka--Foulkes polynomial} $K_{\lambda,\mu}(t)$ by
\begin{equation}
  \Mac_{\mu}(X;0,t) = \sum_{\lambda} K_{\lambda,\mu}(t) s_{\lambda}(X).
\end{equation}
Then, considering the $30$ standard Young tabloids of shape $(2,2,1)$, exactly two of the dual equivalences classes have generating function $s_{(3,2)}(X)$, giving
\[ K_{(3,2),(2,2,1)}(t) = t + t^2 . \]

Define the \emph{nonsymmetric Kostka--Foulkes polynomial} $K_{a,b}(q)$ by
\begin{equation}
  \mac_{b}(X;q,0) = \sum_{a} K_{a,b}(q) \key_{a}(X).
\end{equation}
Then, considering the $30$ standard key tabloids of shape $(3,0,2)$, exactly two of the dual equivalences classes have generating polynomial that stabilizes to $s_{(2,2,1)}(X)$, giving
\[ K_{(2,1,2),(3,0,2)}(q) = q  \hspace{1em} \mbox{and} \hspace{1em} K_{(1,2,2),(3,0,2)}(q) = q^2 . \]

Therefore we may reformulate Theorem~\ref{thm:mac-stable} in terms of Kostka--Foulkes polynomials as follows.

\begin{corollary}
  Given a weak composition $b$ with column lengths $\mu$ such that $\SKT(b)$ has no virtual Yamanouchi elements, we have
  \begin{equation}
    K_{\lambda,\mu}(t) = \sum_{\mathrm{sort}(\mathrm{flat}(a)) = \lambda^{\prime}} K_{a,b}(t) .
  \end{equation}
\end{corollary}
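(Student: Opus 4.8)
The plan is to deduce the identity by expanding both sides of the stable limit in Theorem~\ref{thm:mac-stable} in the Schur basis and comparing coefficients of $s_{\lambda'}(X)$. All the substance is already contained in Theorems~\ref{thm:HL-key} and \ref{thm:mac-stable} together with the stability \eqref{e:key-stable} of key polynomials; what remains is bookkeeping that relates the ``unstable'' coefficients $K_{a,b}$ to the ``stable'' coefficients $K_{\lambda,\mu}$, and that translates between indexing by weak compositions and by partitions.

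First I would record the translation of hypotheses. Since the $j$th column of the key diagram of $b$ has one cell for each row $i$ with $b_i\geq j$, the multiset of column lengths of $b$ equals $\mathrm{sort}(b)'$, so the hypothesis says $\mathrm{sort}(b)'=\mu$, i.e. $\mathrm{sort}(b)=\mu'$. Hence Theorem~\ref{thm:mac-stable} applied with $a=b$ yields
\[
\lim_{m\to\infty}\mac_{0^m\times b}(X;q,0) \;=\; \omega\,\Mac_{\mu}(X;0,q).
\]
Expanding the right-hand side via the definition of $K_{\lambda,\mu}$ and $\omega s_{\lambda}=s_{\lambda'}$, and reindexing by $\nu=\lambda'$, gives $\omega\,\Mac_{\mu}(X;0,q)=\sum_{\nu}K_{\nu',\mu}(q)\,s_{\nu}(X)$.

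Next I would expand the left-hand side combinatorially. The hypothesis that $\SKT(b)$ has no virtual Yamanouchi elements guarantees every $T\in\YKD(b)$ is non-virtual, so Theorem~\ref{thm:HL-key} together with the fact that $\{\key_a\}$ is a basis identifies $K_{a,b}(q)=\sum_{T\in\YKD(b),\ \des(T)=a}q^{\maj(T)}$; moreover the natural padding bijection $\SKD(b)\stackrel{\sim}{\rightarrow}\SKD(0^m\times b)$ then restricts to $\YKD(b)\stackrel{\sim}{\rightarrow}\YKD(0^m\times b)$, preserves $\maj$ (prepending empty rows changes no legs), and sends $\des(T)$ to $0^m\times\des(T)$. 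Combining this with \eqref{e:key-stable}, under which $\key_{0^m\times\des(T)}(X)\to s_{\mathrm{sort}(\des(T))}(X)$, I obtain
\[
\lim_{m\to\infty}\mac_{0^m\times b}(X;q,0) \;=\; \sum_{T\in\YKD(b)} q^{\maj(T)}\, s_{\mathrm{sort}(\des(T))}(X).
\]
Grouping the Yamanouchi tabloids by the partition $\mathrm{sort}(\des(T))=\mathrm{sort}(\mathrm{flat}(\des(T)))$ and substituting the formula for $K_{a,b}(q)$ rewrites this as $\sum_{\nu}\bigl(\sum_{\mathrm{sort}(\mathrm{flat}(a))=\nu}K_{a,b}(q)\bigr)s_{\nu}(X)$.

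Finally I would equate the two Schur expansions and extract the coefficient of $s_{\lambda'}(X)$, i.e. set $\nu=\lambda'$, to get $K_{\lambda,\mu}(q)=\sum_{\mathrm{sort}(\mathrm{flat}(a))=\lambda'}K_{a,b}(q)$; renaming $q$ to $t$ gives the corollary. (The sum on the right is finite since $K_{a,b}(t)=0$ unless $a$ has the same length as $b$.) The one step requiring genuine care — the main, if modest, obstacle — is verifying that the padding bijection restricts to a bijection $\YKD(b)\cong\YKD(0^m\times b)$ with the stated effect on $\maj$ and $\des$; this is exactly where ``no virtual Yamanouchi elements'' is used, since for virtual $T$ the identity $\des(0^m\times T)=0^m\times\des(T)$ can fail. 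Everything else is formal manipulation.
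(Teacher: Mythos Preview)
Your proposal is correct and follows precisely the approach the paper intends: the paper presents this corollary without an explicit proof, simply declaring it a ``reformulation'' of Theorem~\ref{thm:mac-stable}, and your argument is exactly the natural way to make that reformulation precise---take the Schur expansion of both sides of the stable limit, using the definition of $K_{\lambda,\mu}$ on one side and Theorem~\ref{thm:HL-key} together with \eqref{e:key-stable} on the other, then match coefficients. Your careful handling of the padding bijection on $\YKD$ and the role of the non-virtuality hypothesis fills in the one genuine detail the paper leaves implicit.
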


It remains to be seen if the analogous statement holds for general types.

%
%

\bibliographystyle{amsalpha} 
\bibliography{hall.bib}

\end{document}